\def \UU{{\mathcal{U}}}
\def \N{{\mathbb N}}
\def \R{{\mathbb R}}
\def \1{{\mathbb 1}}
\newtheorem{thm}{Theorem}[section]
\newtheorem{lem}[thm]{Lemma}
\newtheorem{defn}[thm]{Definition}
\newtheorem{definition}[thm]{Definition}
\newtheorem{cor}[thm]{Corollary}
\newtheorem{prop}[thm]{Proposition}
\theoremstyle{definition}
\newtheorem{rem}[thm]{Remark}
\newtheorem{example}[thm]{Example}
\newtheorem{question}[thm]{Open question}
\begin{document}


\title{Compact and Limited operators.}

\author{M. Bachir$^*$, G. Flores$^\dagger$ and S. Tapia-Garc\'ia$^\dagger$}
\date{8th July 2019}

\address{$^*$ Laboratoire SAMM 4543, Universit\'e Paris 1 Panth\'eon-Sorbonne\\ Centre P.M.F. 90 rue Tolbiac\\
75634 Paris cedex 13\\
France}
\email{Mohammed.Bachir@univ-paris1.fr}

\address{$^\dagger$ Departamento de Ingenier\'ia Matem\'atica, CMM (CNRS UMI 2807) Universidad de Chile\\
Beauchef 851\\
Chile}
\email{gflores@dim.uchile.cl, stapia@dim.uchile.cl}


\subjclass{47B07; 47B38; 49J50; 26A16}

\begin{abstract}
Let $T:Y\to X$ be a bounded linear operator between two normed spaces. We characterize compactness of $T$ in terms of differentiability of the Lipschitz functions defined on $X$ with values in another normed space $Z$. Furthermore, using a similar technique we can also characterize finite rank operators in terms of differentiability of a wider class of functions but still with Lipschitz flavour. As an application we obtain a Banach-Stone-like theorem. On the other hand, we give an extension of a result of Bourgain and Diestel related to limited operators and cosingularity.
\end{abstract}

\maketitle


\newcommand\sfrac[2]{{#1/#2}}

\newcommand\cont{\operatorname{cont}}
\newcommand\diff{\operatorname{diff}}

\noindent {\bf Keywords:} Compact Operator, Limited operator, G\^ateaux differentiability, Fr\'echet differentiability, co-Lipschitz function, Delta-convex function. 

\section{Introduction}

It is a well-known fact that differentiability in the sense of different bornologies implies distinct properties of the functions depending on the chosen bornology. In this sense, the most common bornologies are those of finite, compact and bounded sets. Each one of them is related to some type of differentiability, more precisely G\^ateaux, Hadamard and Fr\'echet, respectively (see \cite{Ph}). It is shown in \cite{Ba1} that we can characterize limited operators in terms of differentiability of convex functions via the composition with the operator (see Theorem~\ref{limiteddifferentiability} below). To understand the mentioned result, recall the following definitions introduced by Bourgain and Diestel in \cite{BD}.

\begin{defn}
Let $X$ be a Banach space. A subset $A$ of $X$ is called limited if
\[ p_{n} \overset{*}{\rightharpoonup} 0 \implies \lim_{n\to\infty}\sup_{x\in A} |\langle p_{n},x \rangle| = 0, \]
that is, if every weak-$^{*}$ null sequence converges uniformly on $A$.
\end{defn}

\begin{defn}
Let $X$, $Y$ be Banach spaces and $T:Y\to X$ a bounded linear operator. $T$ is called limited if $T(B)$ is limited for every bounded $B\subset Y$. Equivalently, if $\|T^{*}p_{n}\|\overset{n\to\infty}{\longrightarrow} 0$ whenever $p_{n}\overset{*}{\rightharpoonup} 0$.
\end{defn}
We know that every relatively compact subset of $X$ is limited, but the converse is false in general. For useful properties of limited sets and limited operators we refer to \cite{BD}. Considering this, the result in \cite{Ba1} goes as follows.

\begin{thm}\label{limiteddifferentiability}
Let $X$, $Y$ be Banach spaces, $\UU$ be a convex open subset of $X$ and $T:Y\to X$ be a bounded linear operator. Then, $T$ is limited if and only if for every convex continuous function $f:\UU\to \mathbb{R}$, $f\circ T$ is Fr\'echet differentiable at $y\in Y$ whenever $f$ is G\^ateaux differentiable at $Ty\in \UU$. 
\end{thm}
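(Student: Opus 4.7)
I propose to split the proof into the two directions, using the classical bridge between convex subdifferentials and weak-$*$ convergence in the dual.

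For the forward implication, assume $T$ is limited and let $f:\UU\to\R$ be convex continuous and G\^ateaux differentiable at $x_0:=Ty$ with gradient $p_0\in X^*$. Then $g:=f\circ T$ is convex and continuous on the convex open neighbourhood $T^{-1}(\UU)$ of $y$, and a direct calculation gives that $g$ is G\^ateaux differentiable at $y$ with $\nabla g(y)=T^*p_0$. I would argue by contradiction: if $g$ were not Fr\'echet differentiable at $y$, there would exist $\varepsilon>0$ and $h_n\to 0$ in $Y$ such that
\[
g(y+h_n)-g(y)-\langle T^*p_0,h_n\rangle\geq \varepsilon\|h_n\|.
\]
Choosing subgradients $q_n\in\partial f(x_0+Th_n)$, the subgradient inequality $f(x_0)\geq f(x_0+Th_n)-\langle q_n,Th_n\rangle$ gives $g(y+h_n)-g(y)\leq\langle T^*q_n,h_n\rangle$, so combining with the above yields $\|T^*(q_n-p_0)\|\geq \varepsilon$. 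Since $f$ is G\^ateaux differentiable at $x_0$, the subdifferential $\partial f$ is single-valued and norm-to-weak-$*$ upper semicontinuous at $x_0$ (see \cite{Ph}), hence $q_n\overset{*}{\rightharpoonup}p_0$ in $X^*$. The limited property of $T$ then forces $\|T^*(q_n-p_0)\|\to 0$, a contradiction.

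For the converse, I would prove the contrapositive. Suppose $T$ is not limited, choose $\delta>0$ and a weak-$*$ null sequence $(p_n)\subset X^*$ with $\|T^*p_n\|\geq\delta$, and pick $u_n\in S_Y$ with $\langle p_n,Tu_n\rangle\geq\delta/2$; set $M:=\sup_n\|p_n\|<\infty$. Taking $\UU=X$ and $y=0$, I would exhibit the counterexample
\[
f(x):=\sup_{n\in\N}\bigl(\langle p_n,x\rangle-1/n\bigr)_+,\qquad x\in X,
\]
a convex, nonnegative, $M$-Lipschitz function with $f(0)=0$. To check G\^ateaux differentiability at $0$ with gradient zero, fix $h\in X$ and $\varepsilon>0$: weak-$*$ nullity of $(p_n)$ yields $N$ with $|\langle p_n,h\rangle|<\varepsilon$ for $n\geq N$, and for the finitely many $n<N$ the penalty $-1/n$ dominates once $|t|$ is small enough, whence $f(th)/t\to 0$ as $t\to 0$. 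On the other hand, setting $t_n:=4/(n\delta)$ and $h_n:=t_nu_n$, one finds $h_n\to 0$ and
\[
f(Th_n)\geq\bigl(t_n\langle p_n,Tu_n\rangle-1/n\bigr)_+\geq 1/n=(\delta/4)\|h_n\|,
\]
which prevents $f\circ T$ from being Fr\'echet differentiable at $0$ (where its G\^ateaux gradient must be $0$).

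The main obstacle is the construction in the converse, where the scalars $1/n$ must be tuned so that the supremum is G\^ateaux differentiable at the base point (a directionwise condition, handled by weak-$*$ nullity of $(p_n)$) while simultaneously forcing $f\circ T$ to exhibit a uniform-in-direction failure of Fr\'echet differentiability (detected by the non-vanishing of $\|T^*p_n\|$). The forward direction is more routine, being essentially the standard norm-to-weak-$*$ upper semicontinuity of $\partial f$ at a G\^ateaux point combined with the very definition of a limited operator.
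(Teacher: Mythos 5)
The paper does not actually prove this theorem---it is quoted from \cite{Ba1}---so there is no internal proof to compare against; judged on its own, your argument is correct and is essentially the canonical one: the forward direction is the standard combination of the subgradient inequality with the norm-to-weak-$^{*}$ upper semicontinuity of $\partial f$ at a point of G\^ateaux differentiability, and your function $f(x)=\sup_{n}(\langle p_n,x\rangle-1/n)_{+}$ is precisely the construction from \cite{Ba1} that the paper invokes again in the proof of Theorem \ref{BD}. The only loose end is that the statement fixes an arbitrary convex open $\UU$ and a point $y$ with $Ty\in\UU$ while your counterexample takes $\UU=X$ and $y=0$; this is repaired by translating by $Ty$ and restricting to $\UU$ (all the points $Ty+Th_n$ eventually lie in $\UU$), exactly in the spirit of the reduction the paper carries out for its own main theorem in Proposition \ref{characterization}.
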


In this sense, a limited operator transforms (for convex functions) G\^ateaux differentiability (the weaker type) into Fr\'echet differentiability (the stronger type) via composition. But considering this, we can go further. In this article we prove (see Theorem \ref{differentiability} and Theorem \ref{compactbump}) an analogue to Theorem \ref{limiteddifferentiability}, by replacing convex continuous functions (resp. limited operator) by Lipschitz functions (resp. compact operators). We also extend this idea to characterize finite-rank operators in the same spirit (see Theorem \ref{finitedifferentiability}). Another way to express these results is the fact that in infinite dimensions, what prevents a continuous convex function $f: X\to \R$ which is G\^ateaux differentiable at some point, from being Fr\'echet differentiable at the same point, is the fact that the identity operator on $X$ is not limited, whereas what prevents a general Lipschitz function which is G\^ateaux differentiable to be Fr\'echet differentiable is the fact that the identity on $X$ is not a compact operator. The non-compactness of the identity operator in infinite dimension is the well known Riesz theorem. On the other hand, the fact that the identity operator is not limited in infinite dimension has been discovered independently by Josefson in \cite{J} and Nissenzweig in \cite{N}.

\vskip5mm

This paper is organized as follows. In Section \ref{S1}, we give some useful notation and definitions. In Section \ref{S2}, we prove our first main result Theorem \ref{differentiability} (and its smooth version Theorem \ref{compactbump}) which gives a connection between compact operators and differentiability of Lipschitz functions. As an application, we obtain a Banach-Stone-type theorem in Theorem \ref{iso}. In Section \ref{S3}, we give our second main result Theorem \ref{BD}, which is an extension of the Bourgain-Diestel result in \cite{BD}. In Section \ref{S4}, we prove our third main result Theorem \ref{finitedifferentiability}, which gives a connection between finite-rank operators and differentiability of finitely Lipschitz functions (see Definition \ref{finitely-Lipschitz}).

\section{Some Notation and Definitions.} \label{S1}

Throughout this paper $X$, $Y$ and $Z$ always denote normed spaces. By $B(x,r)$ and $\overline{B}(x,r)$ we denote the open and closed balls centered at $x$ and of radius $r$, respectively, while $B_X$ and $\overline{B}_X$ denote the open and closed unit ball, respectively, and by $S_X$ its unit sphere. We write explicitly the underlying space if needed. If $A$ is a subset of $X$, by $A^c$ we denote the complement of $A$ in $X$, that is $X\setminus A$. If $\mathcal{V}\subset X$ and $f: \UU\subset X\to Z$, then we denote $f|_{\mathcal{V}}$ as the restriction of $f$ to $\UU\cap \mathcal{V}$. Lipschitz functions and some variations will be often used. Recall the following definitions.

\begin{defn}
Let $X$ and $Z$ be normed spaces. We say that a function $f:U\subset X\to Z$ is locally Lipschitz if for every $u\in U$ there exist constants $r,L>0$ such that $\|f(x)-f(y)\|\leq L\|x-y\|$ for every $x,y\in U\cap B(u,r)$. When there exists a constant $L>0$ such that the last inequality is valid for every $x,y\in U$, we say that the function is Lipschitz.
\end{defn}

We denote by $\textup{Lip}(U,Z)$ the linear space of Lipschitz functions from $U$ to $Z$ and by $\textup{Lip}_{x_{0}}(U,Z)$ the subspace of functions $f\in\textup{Lip}(U,Z)$ which vanish at some fixed point $x_{0}\in U$. The space $\textup{Lip}_{x_{0}}(U,Z)$ endowed with the norm 
\[ \|f\|_{Lip} := \sup_{x,y\in U , x\neq y} \frac{\|f(x)-f(y)\|}{\|x-y\|} \]
is a Banach space. In the case $Z=\mathbb{R}$, we simply write $\textup{Lip}(U)$ and $\textup{Lip}_{x_{0}}(U)$.\\

The set $(C_b(X),\|\cdot\|_{\infty})$ denotes the Banach space of all real-valued bounded continuous functions on $X$, equipped with the sup-norm. By $C_b^G(X)$ we denote the space of all bounded, Lipschitz, G\^ateaux-differentiable functions $f$ from $X$ into $\R$. Let $C_b^F(X)$ be the space of all bounded, Lipschitz, Fr\'echet-differentiable functions $f$ from $X$ into $\R$. These spaces are equipped with the norm $\|f\|=\max(\|f\|_{\infty},\|f'\|_{\infty})$. Recall that by the mean value theorem, we have for every $f\in C_b^G(X)$ that
\[\|f'\|_{\infty}=\sup_{x,\overline{x}\in X; x\neq \overline{x}}\frac{|f(x)-f(\overline{x})|}{\|x-\overline{x}\|}=\|f\|_{Lip}.\]
The spaces $C_b^G(X)$ and $C_b^F(X)$, endowed with the mentioned norm, are Banach spaces (See, \cite{DGZ}).


\section{Differentiability and compact operators} \label{S2}
In this section, we present a result in the line of \cite[Theorem 1]{Ba1}, which is a nice characterization of limited operators in terms of the differentiability of convex functions. In our case, we give a characterization of compact operators in terms of differentiability of Lipschitz functions. The main result of this paper reads as follows.

\begin{thm}\label{differentiability}
Let $X$ and $Y$ be Banach spaces, $\UU\subset X$ an open set and $T:Y\to X$ a bounded linear operator. Then $T$ is compact if and only if for every Banach space $Z$ and every locally Lipschitz function $f:U \to Z$, $f\circ T$ is Fr\'echet differentiable at $y\in Y$ whenever $f$ is G\^ateaux differentiable at $Ty\in \UU$.
\end{thm}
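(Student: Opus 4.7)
My strategy is to prove both implications by relating Gâteaux and Fréchet differentiability via \emph{Hadamard} differentiability, i.e.\ uniform convergence of the difference quotients on compact sets.

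For the direct implication, assume $T$ is compact and that $f : \UU \to Z$ is locally Lipschitz with $f$ Gâteaux differentiable at $x_0 := Ty$. A standard estimate (using only the local Lipschitz constant) shows that a locally Lipschitz function which is Gâteaux differentiable at $x_0$ is in fact Hadamard differentiable there, i.e.\ $(f(x_0+th)-f(x_0))/t \to f'(x_0)h$ uniformly for $h$ in any compact subset of $X$ as $t\to 0$. Since $T$ is compact, $\overline{T(B_Y)}$ is compact in $X$; applying the Hadamard property with $h$ ranging over this set immediately yields Fréchet differentiability of $f\circ T$ at $y$, with derivative $f'(x_0)\circ T$.

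For the converse, suppose $T$ is not compact. After extracting a subsequence there exist $\delta>0$ and $(v_n)\subset B_Y$ with $w_n := Tv_n$ satisfying $\|w_n-w_m\|\ge\delta$ for $n\ne m$. A bounded $\delta$-separated sequence cannot approach $0$ in norm, so one may also arrange $\|w_n\|\to c>0$, and a standard estimate shows that the normalized directions $e_n := w_n/\|w_n\|$ are $\eta$-separated on $S_X$ for some $\eta>0$. Choose a fast-growing geometric sequence $\lambda_n\to\infty$ so that $x_n := w_n/\lambda_n$ satisfies $\|x_n-x_m\|\ge \delta/(4\lambda_n)$ for $m>n$, and set $r_n := \|x_n\|\eta/8$. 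The balls $B_n := B(x_n,r_n)$ are then pairwise disjoint, avoid the origin, and both $x_n$ and $r_n$ tend to $0$. Taking $\UU=X$ and $Z=\R$, define the $1$-Lipschitz function
\[
f(x) := \sup_{n\in\N}\,\max\{0,\,r_n - \|x-x_n\|\}.
\]

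It remains to verify that $f$ is Gâteaux differentiable at $0$ with zero derivative while $f\circ T$ is not Fréchet differentiable at $0$. For a fixed $u\in S_X$, the $\eta$-separation of the $e_n$ allows at most two indices $n_\pm$ to satisfy $\|u\mp e_n\|<\eta/2$; for every other $n$ one checks $\mathrm{dist}(e_n,\R u)\ge \eta/4$, whence $\mathrm{dist}(x_n,\R u)=\|x_n\|\,\mathrm{dist}(e_n,\R u)> r_n$, so the line $\R u$ misses $B_n$. The exceptional balls sit near points $x_{n_\pm}$ of positive norm, so $tu\notin\bigcup_n B_n$ for all $|t|$ sufficiently small, giving $f(tu)=0$ near $t=0$ and hence $f'(0)=0$. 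On the other hand, setting $h_n := v_n/\lambda_n\to 0$ in $Y$ gives $Th_n=x_n$, $f(Th_n)=r_n$, and $\|h_n\|\le 1/\lambda_n$, so
\[
\frac{(f\circ T)(h_n)}{\|h_n\|}\ \ge\ r_n\lambda_n\ =\ \frac{\|w_n\|\,\eta}{8}\ \longrightarrow\ \frac{c\eta}{8}\ >\ 0,
\]
which contradicts Fréchet differentiability. The main difficulty is calibrating $\lambda_n$ and $r_n$ simultaneously: the balls must be small enough that generic lines through $0$ miss them all (securing Gâteaux differentiability at $0$), yet large enough that the Fréchet ratio $r_n/\|h_n\|$ stays bounded below.
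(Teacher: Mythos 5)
Your proposal is correct and takes essentially the same route as the paper: necessity via the observation that a locally Lipschitz, G\^ateaux differentiable map is Hadamard differentiable, combined with compactness of $\overline{T(B_Y)}$; sufficiency by extracting a norm-separated sequence in $T(B_Y)$, scaling it toward the origin, and building a $1$-Lipschitz function supported on small balls around the scaled points so that every line through $0$ eventually avoids the support while the Fr\'echet quotient along $Th_n$ stays bounded below. The only (cosmetic) differences are that the paper first renormalizes the sequence onto a sphere and isolates the line-avoidance step as a cone-separation lemma, using $f=d(\cdot,C)$ in place of your explicit supremum of tent functions and ``at most two exceptional balls per line'' count, and that the reduction to $\UU=X$, $y=0$, $Z=\R$ which you take for granted is carried out there as a separate proposition.
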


\vskip5mm
The ideas for the proofs of Theorem \ref{differentiability} (and also of Theorem \ref{finitedifferentiability}) are motivated by the following example, in which we find a Lipschitz function $f:\ell^\infty(\N) \to\mathbb{R}$ which is G\^ateaux-differentiable at $0$ and its restriction to $c_0(\N)$ is not Fr\'echet-differentiable at $0$. This type of function cannot exist if the function $f$ is assumed to be convex instead of merely Lipschitz, since the canonical isometry of $c_0(\N)$ into $\ell^\infty(\N)$ is a limited operator (see Theorem \ref{limiteddifferentiability} and \cite{Ba1}).\\
 
Recall that the core of a set $A\subset X$ is the subset $\textup{core}(A)\subset A$ defined by the points $x\in A$ such that for all $y\in X$, there exists $t_y>0$ such that $x+ty\in A$ for all $t\in [0,t_y)$. It is easily seen that $\textup{int}(A)\subset\textup{core}(A)$. Let $f:X\to\R$ be a function. By $\textup{supp}(f)$ we denote the set $\overline{\lbrace x\in X : f(x)\neq 0\rbrace}$.\\

We need the following elementary result.
\begin{prop}\label{intrel} 
Let $f:X\to\R$ be a function such that $0\in \textup{core}(X\setminus \textup{supp}(f))$. Then the function $f$ is G\^ateaux-differentiable at $0$ and $d_Gf(0)=0$.
\end{prop}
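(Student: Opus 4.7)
My plan is a direct unpacking of the definitions, and I do not expect any real obstacle. First, I would observe that since $\textup{core}(A)\subset A$ always holds, the hypothesis gives $0\in X\setminus\textup{supp}(f)$. Because $\{x\in X : f(x)\neq 0\}\subset\textup{supp}(f)$ by definition of the support, this forces $f(0)=0$, which is the base value I will need to compute the difference quotient.

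Next, I would fix an arbitrary direction $y\in X$ and apply the core hypothesis twice: once to $y$ and once to $-y$. This yields positive scalars $t_y$ and $t_{-y}$ such that $ty\in X\setminus\textup{supp}(f)$ for every $t\in[0,t_y)$ and $-ty\in X\setminus\textup{supp}(f)$ for every $t\in[0,t_{-y})$. Setting $\delta:=\min\{t_y,t_{-y}\}>0$, and using once more that $\{f\neq 0\}\subset\textup{supp}(f)$, I conclude that $f(ty)=0$ for all $t\in(-\delta,\delta)$.

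Finally, the difference quotient
\[
\frac{f(0+ty)-f(0)}{t}
\]
is identically zero for $t\in(-\delta,\delta)\setminus\{0\}$, so the G\^ateaux directional derivative at $0$ in the direction $y$ exists and equals $0$. Since $y$ was arbitrary and the zero map is a bounded linear functional on $X$, this proves that $f$ is G\^ateaux differentiable at $0$ with $d_Gf(0)=0$.

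The only subtle point worth highlighting is the need to apply the core hypothesis to both $y$ and $-y$: using it only on $y$ would give the vanishing of $f(ty)$ for $t\ge 0$, which is not enough to conclude that the two-sided limit defining the derivative exists. Taking the minimum $\delta=\min\{t_y,t_{-y}\}$ is what produces the symmetric interval on which the difference quotient is trivially zero.
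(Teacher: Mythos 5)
Your proof is correct and follows essentially the same route as the paper's: unpack the core hypothesis to get a neighbourhood of $0$ on each line through the origin where $f$ vanishes, and read off that every directional difference quotient is identically zero. Your extra care in applying the core condition to both $y$ and $-y$ (since the paper's definition of core is one-sided) is a detail the paper's proof glosses over, but the argument is the same.
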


\begin{proof}
Take any $h\in X$. Since $0\in\textup{core}(X\setminus \textup{supp}(f))$, there exists $\delta>0$ such that $th\in X\setminus \textup{supp}(f)$ for every $|t|<\delta$, which implies $f(th)=0$ for every $|t|<\delta$. From this we conclude directly that $f$ is G\^ateaux-differentiable at $0$, with $d_{G}f(0)=0$.
\end{proof}

\begin{example}\label{exp}

There exists a Lipschitz function $f:\ell^\infty(\N) \to\mathbb{R}$ which is G\^ateaux-differentiable at $0$ and its restriction to $c_0(\N)$ is not Fr\'echet-differentiable at $0$.

\begin{proof}
Consider the sequence $(y_n)_n\subset \ell^\infty(\N)$ defined by $y_n=\frac{1}{n}e_n$, where $e_n$ is the $n$-th canonical coordinate. Consider the set $C_n=B(y_n,\frac{1}{3n})^c$ and $C=\cap_n C_n$. Consider the function $f:\ell^\infty(\N)\to \mathbb{R}$ defined by
\[f(x)=d(x,C)=\inf \{\|x-z\|:~z\in C\},\]
which is known to be $1$-Lipschitz. Since $0$ belongs to the core of $C$, by Proposition \ref{intrel}, we get that $f$ is G\^ateaux-differentiable at $0$ and its differential is $d_Gf(0)=0$. To show that the restriction of $f$ to $c_{0}$ is not Fr\'echet-differentiable at $0$, it suffices to notice that $(y_{n})_n\subset c_{0}$ and
\[\liminf_{n\to\infty} \dfrac{f(y_n)-f(0)}{\|y_n\|}\geq\liminf_{n\to\infty} \dfrac{1/3n}{1/n}= \dfrac{1}{3}\neq 0.\]
\end{proof}
\end{example}

In the construction of the function in the above example, we used without further detail that the unit vectors of $\ell^{\infty}(\N) $ are ``sufficiently separated" (meaning that they are at distance $1$ to each other) and they belong to the image of the canonical injection $\iota:c_{0}(\N) \to\ell^{\infty}(\N) $, which is not compact. For a proof in a general setting, we need to adapt this idea to obtain a sequence of vectors with the aforementioned property and use them in order to obtain a Lipschitz function that has the required property.

\vskip5mm
Before proceeding with the proof of our result, we need the following useful proposition.

\begin{prop}\label{characterization}
    In the context of Theorem \ref{differentiability}, the following are equivalent
    \begin{enumerate}
        \item For every locally Lipschitz function $f:U\to Z$ it holds that $f\circ T$ is Fr\'echet-differentiable at $y\in Y$ whenever $f$ is G\^ateaux-differentiable at $Ty\in U$.
        \item For every Lipschitz function $f: X\to Z$ it holds that $f\circ T$ is Fr\'echet-differentiable at $y\in Y$ whenever $f$ is G\^ateaux-differentiable at $Ty\in X$.
        \item For every Lipschitz function $f: X\to Z$ it holds that $f\circ T$ is Fr\'echet-differentiable at $0\in Y$ whenever $f$ is G\^ateaux-differentiable at $0\in X$.
        \item For every Lipschitz function $f: X\to \mathbb{R}$ it holds that $f\circ T$ is Fr\'echet-differentiable at $0\in Y$ whenever $f$ is G\^ateaux-differentiable at $0\in X$.
    \end{enumerate}
\end{prop}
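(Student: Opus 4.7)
The plan is to prove the equivalence by combining the forward chain $(1)\Rightarrow(2)\Rightarrow(3)\Rightarrow(4)$, which is immediate, with the reverse chain $(4)\Rightarrow(3)\Rightarrow(2)\Rightarrow(1)$, which constitutes the nontrivial content. The three forward implications rest on the observations that any Lipschitz map on the open set $X$ is locally Lipschitz, that $T(0)=0$ makes (3) the case $y=0$ of (2), and that $\R$ is a particular Banach space $Z$ to which (3) applies.

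The technical heart of the proposition, and the step I expect to be the main obstacle, is $(4)\Rightarrow(3)$: passing from scalar- to vector-valued Lipschitz targets. Given a Lipschitz function $f:X\to Z$ which is G\^ateaux-differentiable at $0$, I would write $L:=d_Gf(0)$, a bounded linear operator from $X$ to $Z$ of norm at most the Lipschitz constant of $f$, and introduce the real-valued companion
\[ \tilde f(x)\ :=\ \|f(x)-f(0)-L(x)\|_Z. \]
Then $\tilde f$ is Lipschitz on $X$. Treating the signs $t\to 0^+$ and $t\to 0^-$ separately and using the linearity of $L$, one checks that all directional derivatives of $\tilde f$ at $0$ vanish, so $\tilde f$ is G\^ateaux-differentiable at $0$ with $d_G\tilde f(0)=0$. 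Applying (4) to $\tilde f$ yields that $\tilde f\circ T$ is Fr\'echet-differentiable at $0\in Y$ with derivative $0$; unfolding the definition of $\tilde f$, this is precisely the statement that $f\circ T$ is Fr\'echet-differentiable at $0$ with derivative $L\circ T$.

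The remaining two implications are routine. For $(3)\Rightarrow(2)$, given $f:X\to Z$ Lipschitz and G\^ateaux-differentiable at $Ty$, I would apply (3) to the translate $g(x):=f(x+Ty)-f(Ty)$ and shift the conclusion back to $y$. For $(2)\Rightarrow(1)$, given a locally Lipschitz $f:U\to Z$ which is G\^ateaux-differentiable at $Ty\in U$, I would pick $r>0$ so that $B(Ty,r)\subset U$ and $f$ is Lipschitz on $B(Ty,r)$, and take a scalar Lipschitz cutoff $\phi:X\to[0,1]$ equal to $1$ on $B(Ty,r/3)$ and supported in $B(Ty,2r/3)$. The function
\[ \tilde f(x)\ :=\ \begin{cases}\phi(x)\bigl(f(x)-f(Ty)\bigr), & x\in B(Ty,r),\\ 0, & x\notin B(Ty,r),\end{cases} \]
is Lipschitz on all of $X$ and coincides with $f-f(Ty)$ on a neighborhood of $Ty$, so applying (2) to $\tilde f$ transfers verbatim to the required Fr\'echet-differentiability of $f\circ T$ at $y$.
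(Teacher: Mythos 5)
Your proposal is correct and follows essentially the same route as the paper: the key step $(4)\Rightarrow(3)$ uses exactly the paper's companion function $g(x)=\|f(x)-f(0)-d_Gf(0)x\|$ (you argue directly where the paper argues by contradiction, a cosmetic difference), and your translation and cutoff arguments merely flesh out the implications the paper dismisses as ``simple by noticing that differentiability notions are local and using translations.''
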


\begin{proof}
    It suffices to prove that (4) implies (3) (implications (1)$\implies$(2)$\implies$(3)$\implies$(4) are trivial, while (2)$\implies$(1) and (3)$\implies$(2) are simple by noticing that differentiability notions are local and using translations, respectively).
    
    Suppose that $f:X\to Z$ is a Lipschitz function which is G\^ateaux-differentiable at $0$ and $f\circ T$ is not Fr\'echet-differentiable at $0$. Consider $g:X\to\mathbb{R}$ given by $g(x)=\|f(x)-f(0)-d_{G}f(0)x\|$ (which is trivially Lipschitz). We show that $g$ is G\^ateaux-differentiable at $0$ and $g\circ T$ is not Fr\'echet-differentiable at $0$, which leads to a contradiction. For $h\in S_{X}$ and $t> 0$ we have that
    \[ \frac{g(th)-g(0)}{t} = \frac{\| f(th)-f(0)-td_{G}f(0)h \|}{t} \overset{t\to 0}{\longrightarrow} 0, \]
    which implies that $g$ is G\^ateaux-differentiable at 0, with $d_{G}g(0)=0$. But for $t>0$
    \begin{align*}
        \sup_{h\in S_{X}} \frac{|(g\circ T)(th)-(g\circ T)(0)-t(d_{G}g(0)\circ T)h|}{t} =\\
         \sup_{h\in S_{X}} \frac{\| (f\circ T)(th) - (f\circ T)(0) - t(d_{G}f(0)\circ T)h \|}{t} 
    \end{align*}
    
    which does not converge to $0$ whenever $t$ goes to 0, since $f\circ T$ is not Fr\'echet-differentiable at $0$.

\end{proof}

To get the results, it suffices to work with statement $(4)$ in Proposition \ref{characterization}. 

\subsection{Characterization of compactness}

We begin this subsection with a definition and two results that consist of a central part of the proof of Theorem \ref{differentiability}. Recall that for a convex set $A$, the generated cones $\textup{cone}\{A\}$ is the set $\lbrace \lambda a: \lambda>0, a\in A \rbrace$.

\begin{defn}
Let $(x_n)_n\subset X$ be a bounded sequence. We say that $(x_n)_n$ is $\beta$-separated if $\|x_n\| =\|x_m\|$ for every $n,m\in\mathbb{N}$ and there exists $\beta>0$ such that $\|x_n-x_m\|\geq \beta$ if $n\neq m$. Furthermore, we say that the sequence $(x_n)_n$ is $\beta$-cone-separated if the pairwise intersection of the generated cones $\textup{cone}\{B(x_n,\beta)\}$ is $\{0\}$.
\end{defn}
In the case of $\ell^\infty(\N)$, the canonical sequence of coordinates $(e_n)_n$ is $1$-separated and $(\frac{1}{2}-\varepsilon)$-cone separated, with $\varepsilon\in (0,\frac{1}{2})$, and as the next proposition shows, this is not an special behaviour.

\begin{prop}\label{coneseparated}
Let $(x_n)_n\subset X$ be a sequence which is $\beta$-separated, then it is $\beta/4$-cone-separated.
\end{prop}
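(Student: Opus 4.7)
My plan is to argue by contradiction. Suppose $(x_n)_n$ is $\beta$-separated with common norm $r:=\|x_n\|$. The triangle inequality together with $\|x_n-x_m\|\geq\beta$ for $n\neq m$ forces $2r\geq\beta$, so in particular $r\geq\beta/2>\beta/4$; this harmless observation guarantees that no ball $B(x_n,\beta/4)$ contains the origin, and will let me use the reverse triangle inequality freely.

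Assume, toward a contradiction, that there exists a nonzero $z\in \textup{cone}\{B(x_n,\beta/4)\}\cap \textup{cone}\{B(x_m,\beta/4)\}$ for some $n\neq m$. Then I can write $z=\lambda a=\mu b$ with $\lambda,\mu>0$, $a\in B(x_n,\beta/4)$ and $b\in B(x_m,\beta/4)$. The key structural observation, which is really the content of the cone definition, is that $a$ and $b$ are both positive scalar multiples of the same nonzero vector $z$, so they lie on a common ray from the origin; setting $w:=z/\|z\|$, I can write $a=s_1 w$ and $b=s_2 w$ with $s_1,s_2>0$.

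The proof will then be completed by chaining a reverse and a forward triangle inequality. The reverse triangle inequality applied to $\|a-x_n\|<\beta/4$ and $\|b-x_m\|<\beta/4$ gives $|s_i-r|=\bigl|\|s_iw\|-r\bigr|<\beta/4$ for $i=1,2$, whence
\[\|a-b\|=|s_1-s_2|\leq |s_1-r|+|r-s_2|<\beta/2.\]
Combining this with $\|x_n-x_m\|\geq \beta$ yields
\[\beta\leq \|x_n-x_m\|\leq \|x_n-a\|+\|a-b\|+\|b-x_m\|<\beta/4+\beta/2+\beta/4=\beta,\]
a contradiction. I do not anticipate any significant obstacle; the only point that requires a moment of thought is the collinearity of $a$ and $b$ through the origin, but this is immediate from $z=\lambda a=\mu b$, and once it is in hand the factor $\beta/4$ is exactly what the chained inequality demands.
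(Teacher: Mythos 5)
Your proof is correct and follows essentially the same route as the paper's: both arguments take a nonzero point in the intersection of two cones, use the reverse triangle inequality together with the hypothesis $\|x_n\|=\|x_m\|$ to see that the two ball points lying on the common ray are within $\beta/2$ of each other, and conclude $\|x_n-x_m\|<\beta$, a contradiction. The only cosmetic difference is that the paper packages this via the radial projections $P_{\beta/4}(x_n)$ onto the sphere of radius $\|x_n\|$ and their pairwise disjointness, whereas you chain the triangle inequalities directly; your preliminary observation that $r\geq\beta/2$ is, as you say, harmless but also unnecessary.
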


\begin{proof}
Let $x\in X\setminus \{0\}$ and $0<\alpha <\|x\|$. We will first show that the set
\[ P_{\alpha}(x) := \left\{ \frac{\|x\|}{\|y\|}y \,:\, y\in B(x,\alpha) \right\} \]
is contained in $B(x,2\alpha)$. Let $y\in B(x,\alpha)$, then:
\[ \left\|x-\frac{\|x\|}{\|y\|}y\right\|\leq \| x-y\| + \left\|y-\frac{\|x\|}{\|y\|} y\right\| \]
\[ = \|x-y\| +|\|y\|-\|x\|| \leq 2\|x-y\| < 2\alpha. \]
Now if $(x_n)_n$ is a $\beta$-separated sequence, consider for each $n\in\mathbb{N}$ the sets $P_{\beta/4}(x_{n})$. With this, $P_{\beta/4}(x_{n})\subset B(x_n,\frac{\beta}{2})$ for each $n\in\mathbb{N}$ which by definition of $\beta$-separated implies that the family $(P_{\beta/4}(x_{n}))_{n}$ is pairwise disjoint. Suppose that there exists $n\neq m$ such that \[ \textup{cone}\{B(x_n,\beta/4)\}\cap \textup{cone}\{B(x_m,\beta/4)\} \neq \{0\} \]
and take $y\neq 0$ belonging to this intersection. By definition, there exist $a, b >0$ and $y_1\in B(x_n,\beta/4)$, $y_2\in B(x_m,\beta/4)$ such that $y = ay_1=b y_2$. Since $\|x_n\|=\|x_m\|$, we see that
\[ \frac{\|x_n\|}{\|y\|}y = \frac{\|x_m\|}{\|y\|}y \]
Equivalently,
\[ \frac{\|x_n\|}{\|y_1\|}y_1 = \frac{\|x_m\|}{\|y_2\|}y_2. \]
This implies that $P_{\beta/4}(x_{n})\cap P_{\beta/4}(x_{m})\neq\emptyset$, which is a contradiction. We conclude that $(x_n)_n$ is $\beta/4$-cone-separated
\end{proof}

\begin{lem}\label{separated}
Let $T:Y\to X$ be a bounded operator which is not compact. Then there exists $\beta>0$ and a $\beta$-separated sequence $(x_{n})_{n}$ in $X$ such that $x_{n}=Ty_{n}$ and $(y_{n})_{n}$ is bounded on $Y$.
\end{lem}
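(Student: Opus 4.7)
\medskip

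\noindent\textbf{Proof proposal for Lemma \ref{separated}.}

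The plan is to first extract a bounded sequence whose images under $T$ are separated by some $\varepsilon>0$, then pass to a subsequence along which $\|Ty_n\|$ converges to a strictly positive limit, and finally rescale so that all the resulting vectors have the same norm while preserving (a weakened form of) the separation.

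First, I would use the fact that $T$ is not compact iff $T(\overline{B}_Y)$ fails to be totally bounded in $X$. Hence there exists $\varepsilon>0$ and a sequence $(y_n)_n\subset \overline{B}_Y$ such that the images $x_n:=Ty_n$ satisfy $\|x_n-x_m\|\geq \varepsilon$ for all $n\neq m$. In particular $(x_n)_n$ is bounded (by $\|T\|$) but cannot cluster at $0$: if some subsequence had $\|x_{n_k}\|\to 0$, then $\|x_{n_k}-x_{n_l}\|\leq \|x_{n_k}\|+\|x_{n_l}\|\to 0$, contradicting separation. More precisely, any subsequential limit $r$ of $\|x_n\|$ satisfies $r\geq \varepsilon/2$. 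Passing to a subsequence (which I still denote $(x_n)_n$, $(y_n)_n$), I may assume $\|x_n\|\to r$ for some $r\geq \varepsilon/2>0$, and indeed $\|x_n\|\geq r/2$ for all $n$ large enough (truncating the initial terms if needed).

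Next, I rescale. Define
\[
\widetilde{y}_n := \frac{r}{\|Ty_n\|}\, y_n, \qquad \widetilde{x}_n := T\widetilde{y}_n = \frac{r}{\|x_n\|}\, x_n.
\]
Then $\|\widetilde{x}_n\|=r$ for every $n$, and $\|\widetilde{y}_n\|=\frac{r}{\|x_n\|}\|y_n\|\leq \frac{r}{r/2}\cdot 1 = 2$, so $(\widetilde{y}_n)_n$ is bounded. It remains to check that $(\widetilde{x}_n)_n$ stays separated. Writing
\[
\widetilde{x}_n-\widetilde{x}_m = \frac{r}{\|x_n\|}(x_n-x_m) + r\!\left(\frac{1}{\|x_n\|}-\frac{1}{\|x_m\|}\right)x_m,
\]
the first term has norm at least $\tfrac{r}{\|x_n\|}\varepsilon$, which tends to $\varepsilon$, while the second term tends to $0$ as $n,m\to\infty$ because $\|x_n\|,\|x_m\|\to r$ and $\|x_m\|$ is bounded. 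Hence there exists $N$ such that $\|\widetilde{x}_n-\widetilde{x}_m\|\geq \varepsilon/2$ for all $n,m\geq N$ with $n\neq m$. Dropping the first $N$ terms and relabelling, I obtain the desired $(\varepsilon/2)$-separated sequence of constant norm $r$ whose members are images under $T$ of a bounded sequence in $Y$.

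The only mildly delicate point is that the rescaling could \emph{a priori} destroy the separation if the norms $\|x_n\|$ were allowed to approach $0$; this is precisely ruled out by the observation that $\varepsilon$-separation forces the subsequential limit $r$ to be at least $\varepsilon/2$. Once this is established, the computation above makes the preservation of separation after normalisation routine.
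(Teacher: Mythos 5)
Your proof is correct and follows essentially the same route as the paper's: both reduce non-compactness to failure of total boundedness of $T\overline{B}_Y$, observe that the separation forces the norms $\|x_n\|$ to stay bounded away from zero, and rescale to a sphere to get the equal-norm condition. The only difference is the order of operations --- the paper normalizes first and then extracts a separated subsequence from the (still non-totally-bounded) normalized set, whereas you extract the separated sequence first and check directly that normalizing along a subsequence with convergent norms preserves separation up to a factor of two.
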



\begin{proof}
Since $TB_Y$ is not a relatively compact subset of $X$, there exists a sequence $(z_n)_n\subset TB_Y$ without cluster points. Without lose of generality, we can assume that for some $\alpha>0$ and for all $n\in\N$, $\|z_n\|\geq\alpha$. Let us define the sequence $(x_n)_n\subset X$ by $x_n:=\alpha z_n/\|z_n\|$, for all $n\in \N$. Notice that $(x_n)_n\subseteq TB_Y$. Since $(z_n)_n$ does not have cluster points, then neither does $(x_n)_n$. With this, since $A=\{x_n:~n\in \N\}\subset TB_Y\cap \alpha S_X$ is a not relatively compact, bounded set, it cannot be totally bounded. Thus, there exists $\beta>0$ such that $A$ cannot be covered by finitely many balls of radius $\beta$. Finally, leaving out eventually some points of $A$, we can construct a $\beta$-separated sequence in $TB_Y\cap \alpha S_X$.
\end{proof}

\begin{rem}\label{remarkseparated}
In the beginning of the proof of Lemma~\ref{separated} we used that $\overline{TB_{Y}}$ is not compact. However, the result is more general. In fact, in the same way we can prove that if $Z$ is an infinite dimensional subspace of $X$, then for some $\beta>0$ there exists a $\beta$-separated sequence in $Z$. In the case that $T$ is compact, we will find a $\beta$-separated sequence in $TY$ if and only if $TY$ is infinite dimensional. However, in this case the associated sequence $(y_{n})_{n}$ cannot be bounded.
\end{rem}
Now we are able to present the proof of the main theorem.

\begin{proof}[Proof of Theorem \ref{differentiability}]

In \cite[Lemma 3.1]{BL}, M. Bachir and G. Lancien proved the necessity, which we include for the sake of completeness. Suppose that $T$ is compact, $Z$ is a Banach space and $f:U\subset X\to Z$ is locally Lipschitz and G\^ateaux differentiable at $Ty$. Since $f$ is locally Lipschitz and G\^ateaux differentiable at $Ty$, we deduce that it is Hadamard differentiable at $Ty$. Then, if $d_{H}f(Ty)$ denotes the Hadamard differential of $f$ at $Ty$ we will have for $t>0$
\[
   \sup_{h\in B_{Y}} \frac{\| (f\circ T)(y+th) - (f\circ T)(y) - t(d_{H}f(Ty)\circ T)h \|}{|t|} \]
\[ = \sup_{k\in \overline{TB_{Y}}} \frac{\| f(Ty+tk) - f(Ty) - t d_{H}f(Ty)k \|}{|t|}.
\]
From this, since $\overline{TB_{Y}}$ is compact, we deduce that if we take limit of $t\to 0$ in the above expresion, the result is $0$. With this, $f\circ T$ is Fr\'echet differentiable at $y$, being its Fr\'echet differential equal to $d_{H}f(Ty)\circ T$.\\

On the other hand, for the sufficiency we proceed by contradiction. Applying Proposition \ref{characterization} we will construct a Lipschitz function $f:X\to\R$. Assume that $T:Y\to X$ is a noncompact bounded operator. Let $(x_n)_n:=(Ty_n)_n\subset TB_Y$ be a $\beta$-separated sequence given by Lemma \ref{separated}, with $(y_n)_n\subset B_Y$. For each $n\in \N$, consider the set $C_n\subset X$ defined by $C_n=B(\frac{x_n}{n},\frac{\beta}{4n})^c=X\setminus B(\frac{x_n}{n},\frac{\beta}{4n})$ and $C:= \cap_n C_n$. By Proposition \ref{coneseparated}, $0$ belongs to the core of $C$ since each line passing through $0$ intersects at most one set of the sets $B(\frac{x_n}{n},\frac{\beta}{4n})$. By Proposition \ref{intrel}, the function $f:X\to\mathbb{R}$ defined by $f(x)=d(x,C)$ (which is $1$-Lipschitz) is G\^ateaux differentiable at $0$ with differential $d_Gf(0)=0$. However, we notice that:
\[ \liminf_{n\to \infty} \frac{(f\circ T)(y_n/n)-(f\circ T)(0)}{\|y_n/n\|} = \liminf_{n\to \infty} \frac{\frac{\beta}{4n}-0}{\|y_n/n\|}\geq\dfrac{\beta}{4\inf_n\{\|y_n\|\}} > 0, \]
which shows that $f\circ T$ is not Fr\'echet differentiable at $0$, since the sequence $(y_n/n)_n$ goes to $0$.
\end{proof}
As consequence, we obtain that in infinite dimentional Banach space $Y$, the set of all Lipschitz continuous functions that vanish at $0$, which are  G\^ateaux differentiable but not Fr\'echet differentiable at $0$, contains a subspace isometric to $\ell^{\infty}(\N)$. More generally, we have the following corollary.
\begin{cor}\label{badfunction} Let $T:Y\to X$ be a non compact bounded operator. Let $F\subset \textup{Lip}_{0}(X)$ be the set defined as follows: $f\in F$ if and only if $f$ is G\^ateaux differentiable at $0$ and $f\circ T$ is not Fr\'echet differentiable at $0$ or $f\equiv 0$. Then, $F$ contains a subspace isometric to $\ell^{\infty}(\N)$.
\end{cor}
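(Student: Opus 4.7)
The plan is to upgrade the single bad function built in the proof of Theorem \ref{differentiability} into a linear isometric embedding $\Phi:\ell^{\infty}(\N)\hookrightarrow \textup{Lip}_{0}(X)$ whose image lies inside $F$. First I would recall the ingredients already produced there: a $\beta$-separated sequence $(x_n)_n=(Ty_n)_n\subset T B_Y$ with $(y_n)_n$ bounded, and the pairwise-disjoint balls $B_n:=B(x_n/n,\beta/(4n))$. For each $n$ define the $1$-Lipschitz bump
\[
\phi_n(x):=\max\bigl(0,\,\tfrac{\beta}{4n}-\|x-x_n/n\|\bigr),
\]
which is supported in $B_n$ and satisfies $\phi_n(x_n/n)=\beta/(4n)$. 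Note that $\sum_n \phi_n$ is exactly the distance function $d(\cdot,C)$ used in the proof of Theorem \ref{differentiability}.

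The key idea is to spread each coordinate of $a\in\ell^{\infty}(\N)$ across infinitely many bumps. Fix a partition $\N=\bigsqcup_{k\in\N}N_k$ with every $N_k$ infinite, and define
\[
\Phi(a)=f_a,\qquad f_a(x):=\sum_{k}a_k\sum_{n\in N_k}\phi_n(x).
\]
Because the supports are pairwise disjoint, at most one term of the double sum is nonzero at any $x$, so $f_a$ is well-defined, linear in $a$, and $f_a(0)=0$.

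The main technical step I would carry out is to show $\|f_a\|_{\textup{Lip}}=\|a\|_\infty$. For the upper bound I would handle the two non-trivial situations: if $x\in B_n$ and $y\notin\bigcup_m B_m$ then $|f_a(x)-f_a(y)|=|a_k|\phi_n(x)\leq\|a\|_\infty\|x-y\|$ using that $\phi_n$ vanishes on $\partial B_n$ and picking the point $z\in\partial B_n$ on the segment $[x,y]$; if $x\in B_n$ and $y\in B_m$ with $n\neq m$, I would introduce the entry/exit points $z_1,z_2\in [x,y]$ on the respective boundaries, apply the previous case twice, and use $\|x-z_1\|+\|z_2-y\|\leq\|x-y\|$. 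For the lower bound, fixing any $k$ and $n\in N_k$, and letting $x\to x_n/n$ along a radial direction gives $\|f_a\|_{\textup{Lip}}\geq|a_k|$, so $\|f_a\|_{\textup{Lip}}\geq\|a\|_\infty$. This is where the partition trick is essential: without it, a finitely supported $a$ would produce an $f_a$ whose composition with $T$ vanishes on a neighborhood of $0$, killing the non-differentiability and breaking membership in $F$.

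Finally I would verify that $\Phi(\ell^{\infty}(\N))\subset F$. G\^ateaux differentiability of $f_a$ at $0$ with $d_Gf_a(0)=0$ is immediate from Proposition \ref{intrel} and Proposition \ref{coneseparated}: each line through $0$ meets at most one of the cones $\textup{cone}\{B(x_n,\beta/4)\}$, and even inside such a cone the point $th$ lies outside $B_n$ for all sufficiently small $|t|$, so $f_a$ vanishes on a neighborhood of $0$ along every ray. For the non-Fr\'echet differentiability of $f_a\circ T$ when $a\neq 0$, pick $k_0$ with $a_{k_0}\neq 0$ and $M\geq\sup_n\|y_n\|$; for $n\in N_{k_0}$,
\[
\frac{(f_a\circ T)(y_n/n)-(f_a\circ T)(0)}{\|y_n/n\|}=\frac{a_{k_0}\,\beta/(4n)}{\|y_n\|/n}\geq\frac{a_{k_0}\,\beta}{4M},
\]
while $y_n/n\to 0$ in $Y$, exactly as in the proof of Theorem \ref{differentiability}. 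Hence $f_a\in F$ for every $a\neq 0$ and $f_0\equiv 0\in F$, completing the plan.
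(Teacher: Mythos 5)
Your proposal is correct and follows essentially the same strategy as the paper's proof: the paper partitions the indices via prime powers ($N_p=\{p^n:n\in\N\}$) and writes $f_\mu$ as a difference of suprema of the disjointly supported distance functions $f_p=d(\cdot,C_p)=\sum_{n\in N_p}\phi_n$, which, because the supports are pairwise disjoint, coincides with your sum $\sum_k a_k\sum_{n\in N_k}\phi_n$. Your write-up is if anything slightly more detailed on the isometry estimate, which the paper only asserts.
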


\begin{proof}
Consider as before a $\beta$-separated sequence $(x_{n})_n\subset TB_{Y}$ and one of its asociated sequences on $Y$; $(y_{n})_n\subset B_Y$ such that $Ty_{n}=x_{n}$. For every $p\in \mathbb{N}$ prime define the sets $C_{p,n}=B(\frac{x_{p^{n}}}{p^{n}},\frac{\beta}{4p^{n}})^{c}$ and $C_{p}:=\cap_{n}C_{p,n}$. Just as before, the functions $f_{p}(x)=d(x,C_{p})$ are $1$-Lipschitz, G\^ateaux differentiable at $0$ and such that the compositions $f_{p}\circ T$ are not Fr\'echet differentiable at $0$. In the following $(p_{i})_{i}$ stands for an enumeration of the prime numbers.
We have that $(f_{p_{i}})_{i} \subset \textup{Lip}_{0}(X)$ are linearly independent, since their supports are disjoint. Moreover, if $\mu\in\ell^{\infty}(\N)$, the function
\[ f_{\mu}(x): = \sup_{i\in I_+} \mu_{i} f_{p_{i}}(x)-\sup_{i\in I_-}-\mu_{i} f_{p_{i}}(x), \]
where $I_+=\{n\in \N:~\mu_n\geq 0\}$ and $I_-=\N\setminus I_+$, is well defined and $\|\mu\|_{\infty}$-Lipschitz. That is, the operator $L:\ell^{\infty}(\N)\to\textup{Lip}_{0}(X)$ given by $L\mu = f_{\mu}$ is an isometry. By Proposition \ref{coneseparated}, is easy to see that $0\in\textup{core} \left(\cap_{i} C_{p_{i}}\right)$ and by Proposition~\ref{intrel}, $L\mu$ is G\^ateaux differentiable at $0$. But if $\mu\neq 0$, $f_{\mu}$ is not Fr\'echet differentiable at $0$, since if $\mu_{k}\neq 0$, then
\[\liminf_{n\to \infty} \frac{(f_{\mu}\circ T)(y_{p_{k}^{n}}/p_{k}^{n})-(f_{\mu}\circ T)(0)}{\|y_{p_{k}^{n}}/p_{k}^{n}\|} = \liminf_{n\to \infty} \frac{f_{p_{k}}(x_{p_{k}^{n}}/p_{k}^{n})-f_{p_{k}}(0)}{\|y_{p_{k}^{n}}/p_{k}^{n}\|} \]
\[ \geq\liminf_{n\to \infty} \frac{\frac{\beta}{4p_{k}^{n}}-0}{\|y_{p_{k}^{n}}/p_{k}^{n}\|}\geq\frac{\beta}{4\inf_n\{\|y_n\|\}} > 0. \]
\end{proof}

This corollary says that the set $F$ defined there is $c$-lineable, meaning that it contains an isometric copy of a normed space of dimension $c$. More on lineability and spaceability can be found in \cite{ABPS-book}, \cite{ABMP2015}, \cite{GQ2004} and \cite{DF2019}.\\

An interesting case in this framework is whenever the space $X$ admits a smooth bump function. We understand by a bump function a nonnegative continuous function $b:X\to\R$ different from $0$ and with bounded support. The existence of a bump function $b\in C_b^G(X)$ (resp $b\in C_b^F(X)$) is not always trivial and requires in general some geometrical properties on the underlying Banach space $X$. For more information about smooth bump functions in infinite dimensional Banach spaces, we refer to the book of Deville, Godefroy and Zizler \cite{b_DGZ} and the survey \cite{FMc}.  
\begin{thm}\label{compactbump}
Let $X$ and $Y$ be Banach spaces, $U\subset X$ an open set and $T:Y\to X$ a bounded linear operator. Assume that $X$ admits a Lipschitz G\^ateaux differentiable bump function $b:X\to\R$. Then $T$ is compact if and only if for every bounded Lipschitz, everywhere G\^ateaux differentiable function $f:U \to \R$, $f\circ T$ is everywhere Fr\'echet differentiable.
\end{thm}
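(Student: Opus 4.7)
The necessity is immediate from Theorem \ref{differentiability} with $Z=\R$: if $T$ is compact and $f:U\to\R$ is bounded Lipschitz and G\^ateaux differentiable at every point of $U$, then for each $y\in T^{-1}(U)$ the function $f$ is in particular G\^ateaux differentiable at $Ty$, so $f\circ T$ is Fr\'echet differentiable at $y$. For the sufficiency I argue by contrapositive: assuming $T$ is not compact, I will exhibit a bounded Lipschitz everywhere G\^ateaux differentiable $f:U\to\R$ such that $f\circ T$ fails to be Fr\'echet differentiable somewhere. After a translation (which preserves all hypotheses) one may assume $0\in U$ and look for the defect at $0\in Y$.

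Apply Lemma \ref{separated} to obtain $\beta>0$ and a $\beta$-separated sequence $(x_n)_n=(Ty_n)_n$ with $(y_n)_n\subset B_Y$ and $\|x_n\|=\alpha$. By Proposition \ref{coneseparated} the sequence is $\beta/4$-cone-separated, and since $\alpha\ge\beta/2>\beta/4$ (from $\|x_n-x_m\|\ge\beta$ together with $\|x_n\|=\|x_m\|=\alpha$), the balls $B(x_n/n,\beta/(4n))$ lie in pairwise disjoint cones through the origin, are themselves pairwise disjoint, and every line through $0$ enters at most one of them, and only at strictly positive times. Shrinking and rescaling $b$ if necessary, assume $b$ is $L$-Lipschitz and G\^ateaux differentiable on $X$, $b(0)=1$, and $\textup{supp}(b)\subset B(0,1/2)$. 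Set
\[ f_n(x):=\tfrac{1}{n}\,b\!\left(\tfrac{4n}{\beta}\bigl(x-\tfrac{x_n}{n}\bigr)\right),\qquad f(x):=\sum_{n\ge 1}f_n(x). \]
Then $\textup{supp}(f_n)\subset \overline B(x_n/n,\beta/(8n))$, these closed supports are pairwise disjoint (discarding the finitely many indices with supports not contained in $U$), at most one summand is nonzero at any given $x$, and $f$ is bounded by $\|b\|_\infty$ and $4L/\beta$-Lipschitz; for $x,y$ in distinct supports, the segment $[x,y]$ crosses the boundary of each relevant support where $f_n\equiv0$, reducing the Lipschitz estimate to that of a single $f_n$.

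Three verifications finish the argument. First, everywhere G\^ateaux differentiability of $f$: on the interior of each $B(x_n/n,\beta/(4n))$, $f$ locally equals $f_n$ (the chain rule through $b$ gives G\^ateaux differentiability); at any $x\ne0$ outside every $\overline B(x_n/n,\beta/(8n))$, $f$ is locally zero because the supports accumulate only at $0$; at $0$, cone-separation ensures that for each direction $h$ the ray $\{th:|t|<\delta_h\}$ avoids every support for some $\delta_h>0$, so $f(th)\equiv0$ near $t=0$ and the argument of Proposition \ref{intrel} (applied to $\{f=0\}$) yields $d_Gf(0)=0$. Second, non-Fr\'echet-differentiability of $f\circ T$ at $0$: one has $(f\circ T)(y_n/n)=f_n(x_n/n)=1/n$ and $\|y_n/n\|\le 1/n$, so $|(f\circ T)(y_n/n)|/\|y_n/n\|\ge 1$, while $y_n/n\to 0$. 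The delicate point is the G\^ateaux differentiability of $f$ at the accumulation point $0$: the \emph{uniform} angular control provided by cone-separation (rather than just pairwise disjointness of the balls) is what guarantees that every direction escapes the union of supports for a strictly positive interval of parameters, which is precisely where the cone-separation lemma does its job.
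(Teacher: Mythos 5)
Your proof is correct and follows essentially the same route as the paper's: a sum of disjointly supported, rescaled translates of the bump $b$ placed along the cone-separated sequence from Lemma \ref{separated} and Proposition \ref{coneseparated}, with the failure of Fr\'echet differentiability of $f\circ T$ detected at $0$ along the sequence $y_n/n$. You merely fill in the details the paper ``leaves to the reader'' --- in particular centring the bumps at $x_n/n$ (rather than at $x_n$, as the paper's displayed formula literally reads) and the careful treatment of G\^ateaux differentiability at the accumulation point $0$, where Proposition \ref{intrel} cannot be quoted verbatim because $0$ lies in the closure of the union of the supports --- which is exactly what is needed to make the sketch work.
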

\begin{proof}
 The necessity is a particular case of Theorem \ref{differentiability}. On the other hand, we assume that $T$ is a noncompact operator and we will construct a bounded Lipschitz and G\^ateaux differentiable function $f:X\to \R$ such that $f\circ T$ fails to be Fr\'echet differentiable. Let $b:X\to \R$ a bump Lipschitz, G\^ateaux differentiable function such that $b(0)> 0$ and $\textup{supp}(b)\subset B_X$. Let $(x_n)_n=(Ty_n)_n\subset TB_Y$ be a $\beta$-separated sequence given by Lemma \ref{separated}, with $(y_n)_n\subset Y$ a bounded sequence. For each $n\in \N$, we define $b_n: X\to \R$ by $b_n(\cdot)= b(n(\cdot-x_n))/n$ and $f= \sum_{i=0}^\infty b_n$. Since the functions $\{b_n:~n\in\N\}$ have pairwise disjoint support and have uniformly bounded Lipschitz constant, $f$ is Lipschitz. Moreover, by construction $f$ is G\^ateaux differentiable on $X$. Now, the proof follows in a similar way as Theorem \ref{differentiability}, leading to that $f$ is not Fr\'echet differentiability at $0$. We leave the details to the reader. 
\end{proof}
\subsection{Application to a Banach Stone Like Theorem}

 The following definition was introduced in \cite{BaJFA} (see also \cite{Ba-S}) and the following axioms in \cite{Ba-S}.
\vskip5mm
\begin{definition} \label{beta} {\bf (The property $P^F$)} Let $(X,d)$ be a complete metric space and $(A,\|\cdot\|)$ be a closed subspace of $C_b(X)$ (the space of all real-valued bounded continuous functions on $X$). We say that $A$ has the property $P^F$ if, for each sequence $(x_n)_n\subset X$, the two following assertions are equivalent:

\begin{enumerate}
    \item The sequence $(x_n)_n$ converges in $(X,d)$.
    \item The associated sequence of the Dirac masses $(\delta_{x_n})_n$ converges in $(A^*,\|\cdot\|_*)$, where the Dirac mass associated to a point $x\in X$, is the linear continuous form $\delta_x: \varphi\mapsto \varphi(x)$ for each $\varphi\in A$.
\end{enumerate}
\end{definition}
\noindent {\bf Axioms.} Let $(X,d)$ be a complete metric space and $A$ be a space of functions included in $C_b(X)$. We say that the space $A$ satisfies the axioms $(A_1)$-$(A^F_4)$ if the space $A$ satisfies:

 $(A_1)$ The space $(A,\|\cdot\|)$ is a Banach space such that $\|\cdot\| \geq \|\cdot\|_{\infty}$,

 $(A_2)$ The space $A$ contain the constants,

 $(A_3)$ For each $n\in \N$ there exists a positive constant $M_n$ such that for each $x\in X$ there exists a function $h_n: X\rightarrow[0,1]$ such that $h_n\in A$, $\|h_n\|\leq M_n$, $h_n(x)=1$ and $\textup{diam}(\textup{supp}(h_n))<\frac{1}{n+1}$. This axiom implies in particular that the space $A$ separate the points of $X$,

$(A^F_4)$ the space $A$ has the property $P^F$.
\vskip5mm
A simple adaptation of the proof in \cite[Proposition 2.5.]{BaJFA}, shows that the spaces $C_b^G(X)$ and $C_b^F(X)$ have the property $P^F$ for every Banach space $X$. In addition, we assume that these spaces contain a bump function respectively, then they will satisfy the axiom $(A_3)$. Thus, the spaces $C_b^G(X)$ and $C_b^F(X)$ satisfy the axioms $(A_1)$-$(A^F_4)$ whenever they contain a bump function respectively, and so we can apply the extension of the Banach-Stone theorem established in \cite[Corollary 1.3.]{Ba-S}. This is what we are going to do in Theorem~\ref{iso}, using our previous Theorem \ref{compactbump}.

\begin{thm} \label{iso} Let $X$ and $Y$ be Banach spaces having a bump function in $C_b^G(X)$ and $C_b^F(Y)$ respectively (see \cite{DGZ}). Then, the following assertions are equivalent.

\begin{enumerate}
    \item There exists an isomorphism $\Phi : C_b^G(X)\to C_b^F(Y)$ such that $\|\Phi(f)\|_{\infty}=\|f\|_{\infty}$ and $\|(\Phi(f))'\|_{\infty}=\|f'\|_{\infty}$ for all $f\in C_b^G(X)$
    \item $X$ and $Y$ are isometrically isomorphic and of finite dimension.
\end{enumerate}

\end{thm}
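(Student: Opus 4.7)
The plan is to establish the two implications separately. For (2)$\Rightarrow$(1), I would first observe that in finite dimension every bounded Lipschitz everywhere G\^ateaux differentiable function is automatically everywhere Fr\'echet differentiable: on a finite-dimensional space the closed unit ball is norm-compact, so Hadamard differentiability coincides with Fr\'echet differentiability, and a Lipschitz G\^ateaux differentiable function is automatically Hadamard differentiable. Consequently $C_b^G(X)=C_b^F(X)$ isometrically. If $\tau:Y\to X$ is a surjective linear isometry, the composition operator $\Phi(f):=f\circ\tau$ is a linear isomorphism $C_b^G(X)\to C_b^F(Y)$ satisfying $\|\Phi(f)\|_{\infty}=\|f\|_{\infty}$ and $\|(\Phi(f))'\|_{\infty}=\|f'\|_{\infty}$.

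For the substantive direction (1)$\Rightarrow$(2), I would invoke the extension of the Banach-Stone theorem stated as \cite[Corollary 1.3.]{Ba-S}. As noted in the paragraph immediately preceding the theorem, under the bump-function hypotheses both $C_b^G(X)$ and $C_b^F(Y)$ (with the norm $\max(\|\cdot\|_{\infty},\|\cdot\|_{Lip})$) satisfy the axioms $(A_1)$-$(A^F_4)$. That corollary applies to the given isometry $\Phi$ and produces a surjective isometry $\tau:Y\to X$ (linear by Mazur-Ulam, after normalizing at $0$) such that $\Phi$ is essentially the composition operator $f\mapsto f\circ\tau$, up to a constant that is irrelevant to differentiability. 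In particular Fr\'echet differentiability is preserved under $\Phi$ and pulled back through $\tau^{-1}$.

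The next step is to eliminate the possibility that $X$ (equivalently $Y$) is infinite-dimensional. Suppose for contradiction that $\dim X=\infty$. By Riesz's theorem, the identity $I_X:X\to X$ is not compact. Since $X$ admits a Lipschitz G\^ateaux differentiable bump, Theorem~\ref{compactbump} applied to $T=I_X$ produces a bounded, Lipschitz, everywhere G\^ateaux differentiable function $f:X\to\R$ that fails to be everywhere Fr\'echet differentiable; thus $f\in C_b^G(X)\setminus C_b^F(X)$. Applying $\Phi$, the function $\Phi(f)=f\circ\tau$ belongs to $C_b^F(Y)$ and is therefore Fr\'echet differentiable everywhere. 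But then $f=\Phi(f)\circ\tau^{-1}$ is Fr\'echet differentiable everywhere, as the composition of a Fr\'echet differentiable map with a continuous linear isometry, contradicting the choice of $f$. Hence $X$ is finite-dimensional, and since $\tau$ is an isometric isomorphism, $Y$ is finite-dimensional and isometric to $X$.

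The main obstacle is the invocation of the Banach-Stone step: one must confirm that \cite[Corollary 1.3.]{Ba-S} really produces a surjective (linear) isometry $\tau$ between $X$ and $Y$ for which $\Phi$ has the composition-operator form (modulo sign and additive constant), so that the preimage of a Fr\'echet differentiable function under $\Phi$ is again Fr\'echet differentiable. Once this is granted, the exclusion of infinite dimension is an immediate application of Theorem~\ref{compactbump} to the non-compact identity operator, and the finite-dimensional conclusion is then forced.
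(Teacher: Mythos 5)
Your overall architecture matches the paper's --- the Banach--Stone corollary to get the composition form $\Phi(f)=\pm f\circ T$, Mazur--Ulam for linearity, and Theorem \ref{compactbump} together with Riesz's theorem to force finite dimension --- and your (2)$\Rightarrow$(1) direction is exactly the paper's. But there is a genuine gap in (1)$\Rightarrow$(2), and you have in fact put your finger on it yourself in your closing paragraph: \cite[Corollary 1.3.]{Ba-S} does \emph{not} produce an isometry $\tau:Y\to X$. Applied to the sup-norm condition $\|\Phi(f)\|_{\infty}=\|f\|_{\infty}$, it only yields a \emph{homeomorphism} $T:Y\to X$ and a continuous sign $\epsilon:Y\to\{\pm 1\}$ with $\Phi(f)(y)=\epsilon(y)\,f(T(y))$ (with $\epsilon$ constant by connectedness of $Y$). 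A mere homeomorphism cannot be fed into Mazur--Ulam, so at this stage you have neither the linearity of $T$ needed to invoke Theorem \ref{compactbump} (or to pull Fr\'echet differentiability back through $\tau^{-1}$, as your contradiction argument requires), nor the isometric isomorphism asserted in conclusion (2).

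The missing step is precisely where the second hypothesis $\|(\Phi(f))'\|_{\infty}=\|f'\|_{\infty}$ --- which your (1)$\Rightarrow$(2) argument never uses --- must enter. The paper's Lemma \ref{lem1} establishes the dual formula
\[
\|a-b\|=\sup_{f\in C_b^G(X),\ \|f'\|_{\infty}>0}\frac{|f(a)-f(b)|}{\|f'\|_{\infty}}
=\sup_{f\in C_b^F(X),\ \|f'\|_{\infty}>0}\frac{|f(a)-f(b)|}{\|f'\|_{\infty}},
\]
proved by truncating a norming functional $p_{a,b}$ obtained from Hahn--Banach into a bounded $C_b^F$ function. Combining this formula with $\Phi(f)=f\circ T$, the hypothesis $\|(\Phi(f))'\|_{\infty}=\|f'\|_{\infty}$, and the surjectivity of $\Phi$ gives $\|T(y_1)-T(y_2)\|=\|y_1-y_2\|$, i.e.\ $T$ is a surjective isometry; only then do Mazur--Ulam and Theorem \ref{compactbump} apply, and the remainder of your argument (a correct, slightly repackaged version of the paper's final step, which applies Theorem \ref{compactbump} directly to $T-T(0)$ rather than to $I_X$) goes through. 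As written, your proof assumes the hardest part of the Banach--Stone step rather than proving it.
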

The proof will be given after the following lemma.

\begin{lem} \label{lem1} For every $a, b\in X$, we have 
\begin{eqnarray*}
\|a-b\|&=&\sup_{f\in C_b^F(X)\setminus \lbrace 0 \rbrace, \|f'\|_\infty > 0} \frac{|f(a)-f(b)|}{\|f'\|_{\infty}}\\
       &=& \sup_{f\in C_b^G(X)\setminus \lbrace 0 \rbrace, \|f'\|_\infty > 0} \frac{|f(a)-f(b)|}{\|f'\|_{\infty}}
\end{eqnarray*}
\end{lem}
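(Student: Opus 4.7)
My plan is to sandwich the two suprema, using the inclusion $C_b^F(X)\subset C_b^G(X)$. It will suffice to prove that every $f\in C_b^G(X)$ with $\|f'\|_\infty>0$ satisfies $|f(a)-f(b)|/\|f'\|_\infty \leq \|a-b\|$, and that, conversely, the value $\|a-b\|$ is attained (not just approached) by some explicit $f\in C_b^F(X)$. Combining these two bounds with the inclusion forces both suprema to equal $\|a-b\|$.

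For the upper bound I invoke the mean value identity $\|f\|_{Lip}=\|f'\|_\infty$, recalled in the preliminaries for every $f\in C_b^G(X)$. The Lipschitz estimate $|f(a)-f(b)|\leq \|f\|_{Lip}\|a-b\|$ then immediately gives
\[ \frac{|f(a)-f(b)|}{\|f'\|_\infty} \leq \|a-b\|. \]

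For the lower bound (which I only need over $C_b^F(X)$) I will construct $f$ explicitly. Assuming $a\neq b$ (otherwise both suprema are trivially $0$), the Hahn-Banach theorem yields $\ell\in X^*$ with $\|\ell\|=1$ and $\ell(a)-\ell(b)=\|a-b\|$. Since $\ell$ is unbounded I truncate its range in a smooth, $1$-Lipschitz way: pick $N>\max(|\ell(a)|,|\ell(b)|)$ and a $C^\infty$ cutoff $h\colon \R\to[0,1]$ with $h\equiv 1$ on $[-N,N]$ and $h\equiv 0$ outside $[-2N,2N]$; then set $g(t)=\int_0^t h(s)\,ds$. Then $g\in C^1(\R)$, $\|g\|_\infty\leq 2N$, $g(t)=t$ on $[-N,N]$, and $|g'(t)|=|h(t)|\leq 1$ everywhere. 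The composition $f:=g\circ \ell$ is bounded and Fr\'echet-differentiable by the chain rule, with $f'(x)=g'(\ell(x))\ell$, so $\|f'\|_\infty\leq 1$; hence $f\in C_b^F(X)$. By the choice of $N$ we have $f(a)=\ell(a)$ and $f(b)=\ell(b)$, so $|f(a)-f(b)|=\|a-b\|$, delivering the required lower bound.

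I do not foresee a substantial obstacle. The classical ingredients (Hahn-Banach and a scalar cutoff of $t\mapsto t$) are completely standard, and the crucial quantitative point, that the truncation $g$ can be arranged with $\sup_\R|g'|\leq 1$ rather than merely bounded, is secured by taking the cutoff $h$ with values in $[0,1]$. The one verification worth spelling out in the final write-up is that $g\circ \ell$ genuinely lies in $C_b^F(X)$, which follows at once from the chain rule for Fr\'echet differentiation applied to the $C^1$ scalar function $g$ and the bounded linear functional $\ell$.
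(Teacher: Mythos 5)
Your proposal is correct and follows essentially the same route as the paper: the upper bound via the mean value identity $\|f\|_{Lip}=\|f'\|_{\infty}$, and the lower bound by composing a norming functional from Hahn--Banach with a smooth $1$-Lipschitz truncation of the identity on $\R$ (your $g=\int_0^{\cdot}h$ is just an explicit version of the paper's $\alpha_{\omega}$). The only cosmetic difference is that you make the cutoff construction explicit where the paper merely asserts its existence.
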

\begin{proof} By the Hanh-Banach theorem, there exists $p_{a,b}\in B_{X^*}$ such that be $\|a-b\|=p_{a,b}(a-b)$. For each $\omega >0$, let $\alpha_{\omega}: \R\to \R$ such that $\alpha_{\omega}\in C_b^F(\R)$, $1$-Lipschitz and
\begin{eqnarray*}
\alpha_{\omega} (t)=\left\{ \begin{array}{l} 
t \textnormal{ if } |t|\leq \omega\\
\omega+1 \textnormal{ if } t\geq \omega+2\\
-\omega-1 \textnormal{ if } t\leq -\omega-2
 \end{array} \right.
\end{eqnarray*}
Let us consider the function $f_{\omega}(x)=\alpha_{\omega}\circ p_{a,b}(x)$, for all $x\in X$. We have that $f_{\omega}\in C_b^F(X)$ and is $1$-Lipschitz for every $\omega>0$. By choosing $\omega_0\geq 2\max(\|a\|,\|b\|)$
\begin{eqnarray*}
|f_{\omega_0}(a)-f_{\omega_0}(b)|&=&|\alpha_{\omega_0}\circ p_{a,b}(a)-\alpha_{\omega_0}\circ p_{a,b}(b)|\\
                                 &=&|p_{a,b}(a)-p_{a,b}(b)|\\
                                 &=&|p_{a,b}(a-b)|\\
                                 &=& \|a-b\|.
\end{eqnarray*}
Therefore, $\|f'_{\omega_0}\|_{\infty}=1$. It follows that 
\begin{eqnarray*}
\|a-b\|&\geq&\sup_{f\in C_b^G(X)\setminus \lbrace 0 \rbrace, \|f'\|_\infty > 0} \frac{|f(a)-f(b)|}{\|f'\|_{\infty}}\\
       &\geq& \sup_{f\in C_b^F(X)\setminus \lbrace 0 \rbrace, \|f'\|_\infty > 0} \frac{|f(a)-f(b)|}{\|f'\|_{\infty}}\\
       &\geq& |f_{\omega_0}(a)-f_{\omega_0}(b)|\\
       &=&\|a-b\|
\end{eqnarray*}

\end{proof}

\begin{proof}[Proof of Theorem \ref{iso}] Since $\Phi$ is an isomorphism for the norm $\|\cdot\|_{\infty}$, then from \cite[Corollary 1.3.]{Ba-S}, there exists an homeomorphism $T: Y\to X$ and a continuous function $\epsilon : Y\to \lbrace \pm 1\rbrace$ such that $\Phi(f)(y)=\epsilon(y) f\circ T(y)$ for all $f\in C_b^G(X)$ and all $y\in Y$. Since the space $Y$ is a connected space, we have that $\epsilon=1$ or $\epsilon=-1$. Replacing $\Phi$ by $-\Phi$ if necessary, we can assume without loss of generality that $\Phi(f)=f\circ T$ for all $f\in C_b^G(X)$. We are going to prove that $T$ is an isometry. Let $y_1,y_2\in Y$. Using Lemma \ref{lem1} and the fact that $\|(\Phi(f))'\|_{\infty}=\|f'\|_{\infty}$ for all $f\in C_b^G(X)$, we have 
\begin{eqnarray*}
\|T(y_1)-T(y_2)\|&=&\sup_{f\in C_b^G(X)\setminus \lbrace 0 \rbrace, \|f'\|_\infty > 0} \frac{|f(T(y_1))-f(T(y_1))|}{\|f'\|_{\infty}}\\
                 &=& \sup_{f\in C_b^G(X)\setminus \lbrace 0 \rbrace, \|f'\|_\infty > 0} \frac{|f\circ T(y_1)-f\circ T(y_2)|}{\|f'\|_{\infty}}\\
                 &=& \sup_{f\in C_b^G(X)\setminus \lbrace 0 \rbrace, \|f'\|_\infty > 0} \frac{|\Phi(f)(y_1)-\Phi(f)(y_2)|}{\|(\Phi(f))'\|_{\infty}}\\
                 &=& \sup_{g\in C_b^F(Y)\setminus \lbrace 0 \rbrace, \|g'\|_\infty > 0} \frac{|g(y_1)-g(y_2)|}{\|g'\|_{\infty}}\\
                 &=& \|y_1-y_2\|
\end{eqnarray*}
Thus, $T: Y\to X$ is a surjective isometry. From Mazur-Ulam theorem \cite{JV}, $T$ is an affine map, equivalently $T-T(0)$ is linear. Finally, $T-T(0)$ is a linear surjective isometry from $Y$ onto $X$. So $X$ and $Y$ are isometrically isomorphic. On the other hand, since $f\circ T\in C_b^F(Y)$, whenever $f\in C_b^G(X)$, then $T-T(0)$ is a compact operator by Theorem \ref{compactbump}, which implies thanks to Riesz lemma that $X$ and $Y$ are finite dimensional. Thus, $X$ and $Y$ are finite dimensional and isometrically isomorphic. The converse is clear. Indeed, since G\^ateaux and Fr\'echet differentiability coincides for Lipschitz functions in finite dimensional Banach space, we have that $C_b^G(X)=C_b^F(X)$. On the other hand, if $T: Y\to X$ is an isometric isomorphism, then the operator given by $\Phi(f)=f\circ T$ is an isomorphism between $C_b^F(X)$ and $C_b^F(Y)$ satisfying the two desired conditions. 
\end{proof}
\begin{prop} Let $X$, $Y$ be Banach spaces and $T: Y\to X$ be a bounded linear operator. Then, the following assertions are equivalent. 

\begin{enumerate}
    \item $T$ is compact operator with dense range
    \item The operator $\Phi : C_b^G(X)\to C_b^F(Y)$ defined by $\Phi(f)=f\circ T$ is a well-defined injective bounded linear operator.
\end{enumerate}

\end{prop}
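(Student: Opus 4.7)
The plan is to prove the two implications separately. For $(1)\Rightarrow(2)$, I would use Theorem~\ref{differentiability} directly: if $f\in C_b^G(X)$ then $f$ is Lipschitz (hence locally Lipschitz) and everywhere G\^ateaux differentiable, so with $\UU=X$ that theorem yields Fr\'echet differentiability of $f\circ T$ at every $y\in Y$. Boundedness gives $\|f\circ T\|_\infty\leq \|f\|_\infty$, while the Lipschitz constant of $f\circ T$ is at most $\|T\|\,\|f\|_{\mathrm{Lip}}$, so $\Phi(f)\in C_b^F(Y)$ and $\|\Phi(f)\|\leq \max(1,\|T\|)\,\|f\|$. Linearity is trivial, so $\Phi$ is a well-defined bounded linear operator. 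Injectivity follows because $\Phi(f)\equiv 0$ means $f\equiv 0$ on $T(Y)$, and density of $T(Y)$ together with continuity of $f$ force $f\equiv 0$ on $X$.

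For $(2)\Rightarrow(1)$, I would run two independent contradiction arguments. To obtain density of $T(Y)$: assume $\overline{T(Y)}\neq X$, pick $x_0\notin\overline{T(Y)}$, use Hahn-Banach to produce $p\in X^*$ vanishing on $\overline{T(Y)}$ with $p(x_0)=1$, and set $f:=\tanh\circ p$. This $f$ is bounded by $1$, Lipschitz with constant $\|p\|$, and everywhere Fr\'echet (hence G\^ateaux) differentiable as the composition of a $C^\infty$ function on $\R$ with a continuous linear functional, so $f\in C_b^G(X)$. But $f\circ T=\tanh(p\circ T)\equiv \tanh(0)=0$ while $f(x_0)=\tanh(1)\neq 0$, contradicting injectivity of $\Phi$.

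To obtain compactness of $T$, I would argue the contrapositive via Theorem~\ref{compactbump}: if $T$ were not compact, that theorem would hand us a bounded, Lipschitz, everywhere G\^ateaux differentiable function $f:X\to\R$ (so $f\in C_b^G(X)$) whose composition $f\circ T$ fails to be Fr\'echet differentiable at some point of $Y$, forcing $\Phi(f)\notin C_b^F(Y)$ and contradicting the well-definedness of $\Phi$.

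The main obstacle is the compactness step, which rests on Theorem~\ref{compactbump} and therefore tacitly requires $X$ to admit a Lipschitz G\^ateaux differentiable bump function; this hypothesis guarantees that $C_b^G(X)$ is rich enough to detect non-compactness of $T$. The density argument, by contrast, is a soft Hahn-Banach plus smooth-truncation exercise, and the forward direction is an immediate corollary of Theorem~\ref{differentiability}.
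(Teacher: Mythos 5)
Your proof follows the paper's own argument essentially step for step: the forward direction via the compact-operator differentiability transfer (the paper cites \cite[Lemma 3.1.]{BL}, which is precisely the necessity half of Theorem~\ref{differentiability} that you invoke), injectivity from density of the range, and the converse via Theorem~\ref{compactbump} for compactness plus a Hahn--Banach functional composed with a bounded smooth truncation for density (the paper uses a piecewise-defined $\alpha$ where you use $\tanh$, a cosmetic difference). Your observation that the compactness step tacitly requires $X$ to admit a Lipschitz G\^ateaux differentiable bump function is fair, but the paper's proof invokes Theorem~\ref{compactbump} under exactly the same unstated hypothesis, so this is not a divergence from its route.
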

\begin{proof} Suppose that $T$ is compact, then $f\circ T \in C_b^F(Y)$ whenever $f\in C_b^G(X)$ by \cite[Lemma 3.1.]{BL}, so $\Phi$ maps $C_b^G(X)$ into $C_b^F(Y)$. By the density of the range of $T$, $\Phi$ is injective. Then, it is clear that $\Phi$ is a bounded linear operator satisfying $\|\Phi(f)\|_{\infty}\leq\|f\|_{\infty}$ and $\|(\Phi(f))'\|_{\infty}\leq\|f'\|_{\infty}\|T\|$ for all $f\in C_b^G(X)$. For the converse, since $\Phi$ maps $C_b^G(X)$ into $C_b^F(Y)$, then by Theorem \ref{compactbump} the operator $T$ is compact. Suppose by contradiction that $\overline{T(Y)}\neq X$. There exists $x_0\in X$ such that $x_0\not \in \overline{T(Y)}$. By the Hanh-Banach theorem, there exists a linear continuous form $p\in X^*$ such that $p(x_0)=1$ and $p_{\overline{T(Y)}}=0$. Let $\alpha : \R\to \R$ be such that $\alpha\in C_b^F(\R)$ and
\begin{eqnarray*}
\alpha (t)=\left\{ \begin{array}{l} 
2t-1 \textnormal{ if } 1\leq t\leq 2\\
4 \textnormal{ if } t\geq 4\\
0 \textnormal{ if } t\leq 0
 \end{array} \right.
\end{eqnarray*}
Let us define $f_0(x)=\alpha\circ p$. Thus, $f_0\in C_b^G(X)$ and we have $f_0\circ T =0$. Thus, $\Phi(f_0)=\Phi(0)=0$ but $f_0\neq 0$ since $f_0(x_0)=1$. This contradict the injectivity of $\Phi$.
\end{proof}


\section{Limited operators and co-Lipschitz delta-convex mappings} \label{S3}

We recall the following definition introduced in \cite{BJLPS} and studied in several papers (see for instance \cite{M} and \cite{R}).
\begin{defn} Let $X$ and $E$ be two metric spaces and $f : X \to E$ be a mapping. 
We say that $f$ is co-Lipschitz, if there exists a constant $c>0$ such that for all $x\in X$ and all $r>0$ we have
\begin{eqnarray*}
B(f(x), cr) \subset f(B(x,r)).
\end{eqnarray*}
\end{defn}
The co-Lipschitz property is intimately related with the metric regularity of functions, since if a function $f:X\to E$ is co-Lipschitz, then is metrically regular near $(\overline{x},f(\overline{x}))$ for every $\overline{x}\in X$. The definition (and more) of metric regularity of multivalued functions can be found in \cite{IOFFE}, \cite{RW} and references therein. We also recall the definition of delta-convex mapping introduced by Vesel\'{y} and Zaj\'{i}\v{c}ek in \cite{VZ}.
\begin{defn} Let $X$ and $E$ be two Banach spaces and $h : X \to E$ be a map. 
We say that $h$ is d.c mapping, that is delta-convex mapping if and only if, there exists a convex continuous function, called a control function, $g: X\to \R$, such that $e^*\circ h +g$ is convex continuous for every $e^*\in E^*$. 
\end{defn}
\begin{thm} \label{dc} (see \cite[Theorem 4.]{DVZ}) Let $X$ and $E$ be two Banach spaces and $h : X \to E$ be a d.c mapping with a control function $g$. If $g$ is Fr\'echet-differentiable at $x$ then, $h$ is Fr\'echet-differentiable at $x$.
\end{thm}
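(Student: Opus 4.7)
The plan is to exploit the delta-convex structure to transfer the Fréchet property from the control $g$ to $h$, by building a vector-valued analogue of the classical fact that for a convex continuous $\phi$, Fréchet differentiability at $x$ is equivalent to $\phi(x+u) + \phi(x-u) - 2\phi(x) = o(\|u\|)$.

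First I would establish the key vector inequality
\[\|h(x+u) + h(x-u) - 2h(x)\|_E \le g(x+u) + g(x-u) - 2g(x).\]
For each $e^* \in B_{E^*}$, convexity of $f_{e^*} := e^* \circ h + g$ gives $e^*(h(x+u) + h(x-u) - 2h(x)) \ge -(g(x+u) + g(x-u) - 2g(x))$; replacing $e^*$ by $-e^*$ yields the matching upper bound, and taking the supremum over $B_{E^*}$ produces the claim. Writing $G := dg(x)$, summing $g(x \pm u) - g(x) \mp G(u) = o(\|u\|)$ shows that the right-hand side is $o(\|u\|)$, and hence so is the left.

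Next I would construct the G\^ateaux derivative $L$ of $h$ at $x$ via a Cauchy argument in $E$. For $0 < s < t$ and $\lambda = s/t$, applying convexity of $f_{e^*}$ at $x + su = \lambda(x+tu) + (1-\lambda)x$ with the same Hahn-Banach trick yields $\|h(x+su) - \lambda h(x+tu) - (1-\lambda)h(x)\|_E \le \lambda g(x+tu) + (1-\lambda)g(x) - g(x+su)$, which after dividing by $s$ rearranges to
\[\Bigl\| \tfrac{h(x+su) - h(x)}{s} - \tfrac{h(x+tu) - h(x)}{t} \Bigr\|_E \le \tfrac{g(x+tu) - g(x)}{t} - \tfrac{g(x+su) - g(x)}{s}.\]
The right-hand side tends to $0$ as $s, t \to 0^+$ by G\^ateaux differentiability of $g$, so $L(u) := \lim_{t \to 0^+}(h(x+tu) - h(x))/t$ exists in $E$ for every $u \in X$. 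Linearity of $L$ follows by testing against each $e^*$, since $e^* \circ L$ is the G\^ateaux derivative of $f_{e^*}$ minus the linear $G$; continuity is inherited from the local Lipschitzness of $h$, which is standard for delta-convex maps.

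Finally I would upgrade to Fr\'echet differentiability. Since $q_{e^*} := e^* \circ L + G$ is the unique subgradient of $f_{e^*}$ at $x$, the scalar convex sandwich $0 \le f_{e^*}(x+u) - f_{e^*}(x) - q_{e^*}(u) \le f_{e^*}(x+u) + f_{e^*}(x-u) - 2f_{e^*}(x)$, combined with the first step's bound $f_{e^*}(x+u) + f_{e^*}(x-u) - 2f_{e^*}(x) \le (1+\|e^*\|)(g(x+u)+g(x-u)-2g(x))$, gives after subtracting the analogous remainder for $g$,
\[|e^*(h(x+u) - h(x) - L(u))| \le (1+\|e^*\|)\bigl(g(x+u)+g(x-u)-2g(x)\bigr) + |g(x+u) - g(x) - G(u)|.\]
Taking the supremum over $\|e^*\| \le 1$ and invoking the Fr\'echet property of $g$ yields $\|h(x+u) - h(x) - L(u)\|_E = o(\|u\|)$, as required. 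The hard part throughout is the passage from pointwise-in-$e^*$ estimates to genuine $E$-norm control, and the delta-convex inequality of the first step is exactly what makes the bounds uniform over $B_{E^*}$, allowing both the Cauchy argument and the final remainder estimate to survive the supremum.
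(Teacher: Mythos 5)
Your proof is correct, and it is worth noting that the paper gives no proof of this statement at all --- it simply cites \cite[Theorem 4]{DVZ} --- so there is no in-paper argument to compare against; what you have written is a self-contained proof along the standard Vesel\'y--Zaj\'i\v{c}ek lines. The three pillars are all sound: the second-difference bound $\|h(x+u)+h(x-u)-2h(x)\|\le g(x+u)+g(x-u)-2g(x)$ (legitimate because the d.c.\ definition quantifies over \emph{all} $e^*\in E^*$, so you may use both $e^*$ and $-e^*$ and then take the supremum over $B_{E^*}$), the norm-Cauchy construction of $L(u)$ from monotonicity of convex difference quotients, and the subgradient sandwich in the last step. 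Two points should be tightened. First, your justification of the linearity of $L$ is circular as phrased: you invoke ``the G\^ateaux derivative of $f_{e^*}$'' before knowing that its one-sided directional derivative is linear, which is precisely what is at stake. The repair is short: dividing the second-difference bound by $t$ and letting $t\to 0^+$ gives $L(-u)=-L(u)$; the map $u\mapsto e^*(L(u))+G(u)$ is a one-sided directional derivative of a convex function, hence sublinear, and an odd sublinear functional is linear; additivity of $L$ itself then follows since $e^*\bigl(L(u+v)-L(u)-L(v)\bigr)=0$ for every $e^*$. Second, ``unique subgradient'' is a red herring --- all you need is that $e^*\circ L+G$ \emph{is} a subgradient of $f_{e^*}$ at $x$, which follows from its linearity together with $q_{e^*}(u)=\lim_{t\to 0^+}t^{-1}\bigl(f_{e^*}(x+tu)-f_{e^*}(x)\bigr)\le f_{e^*}(x+u)-f_{e^*}(x)$. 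Finally, the boundedness of $L$ can be read off from your last estimate combined with local boundedness of $h$ near $x$, if you prefer not to import the local Lipschitzness of d.c.\ mappings as a black box. With these patches the argument is complete.
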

\paragraph{\bf Example of Fr\'echet-differentiable not linear d.c mapping.} Every linear bounded mapping is trivially a Fr\'echet-differentiable d.c mapping, but the class of d.c mappings with Fr\'echet-differentiable control function is larger. We know, for example, from \cite[Proposition 1.11.]{VZ} that if $H$ is a Hilbert space and $Y$ is any normed space, then every $C^{1,1}$ mapping $h: H\to Y$ is d.c with a control function $\textnormal{Lip}(h')\|\cdot\|^2$ (which is an everywhere Fr\'echet-differentiable control function). For interesting results about d.c Lipschitz isomorphic normed spaces, we refer to \cite{DVZ}. We need the following proposition.
\begin{prop} \label{conv} Let $X, Y$ be a Banach spaces and $h : X \to Y$ be a continuous d.c mapping with a control function $g$. Let $f : Y \to \R$ be a convex continuous function. Then, $f\circ h +g$ is convex continuous.

\end{prop}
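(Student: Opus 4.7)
The plan is to exploit the d.c. hypothesis by writing $f$ as the supremum of continuous affine functions and then transferring the convexity inside the composition. Since $f:Y\to\R$ is convex and continuous, by the Hahn--Banach theorem applied to the epigraph (equivalently, by the Fenchel--Moreau theorem), we can represent
\[ f(y) = \sup_{(y^*,c)\in A}\bigl(y^*(y)-c\bigr), \qquad y\in Y, \]
where $A\subset Y^*\times\R$ is the set of pairs $(y^*,c)$ such that the continuous affine functional $z\mapsto y^*(z)-c$ minorizes $f$ on $Y$.

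Using this representation, for every $x\in X$,
\[ (f\circ h)(x)+g(x) = \sup_{(y^*,c)\in A}\Bigl[(y^*\circ h)(x)+g(x)-c\Bigr] = \sup_{(y^*,c)\in A}\Bigl[(y^*\circ h +g)(x)-c\Bigr]. \]
By the definition of a d.c.\ mapping with control function $g$, each function $y^*\circ h+g$ is convex and continuous on $X$; subtracting the constant $c$ preserves both properties. Thus $f\circ h+g$ is a pointwise supremum of convex functions, and therefore convex.

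Continuity is immediate and does not require the supremum argument: $h$ is continuous by hypothesis, $f$ is continuous on $Y$, so $f\circ h$ is continuous on $X$; adding the continuous function $g$ yields a continuous function. Combining the two observations gives that $f\circ h+g$ is convex and continuous, as claimed. The only conceptual step is the affine representation of $f$, which I expect to be the main (and only) obstacle; everything else is bookkeeping. Note that we crucially use that $f$ is real-valued and defined on all of $Y$, so that each element of $A$ yields a globally defined continuous affine minorant, and hence $(y^*\circ h+g)-c$ is globally convex continuous on $X$.
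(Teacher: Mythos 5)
Your proposal is correct and follows essentially the same route as the paper: both represent the convex continuous $f$ as a supremum of continuous affine minorants (Fenchel/Hahn--Banach), rewrite $f\circ h+g$ as the supremum of the convex functions $y^*\circ h+g-c$ supplied by the d.c.\ hypothesis, and obtain continuity directly from the continuity of $f$, $h$ and $g$. No gaps.
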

\begin{proof} Since $h$ is d.c mapping , there exists a convex continuous function $g: X\longrightarrow \R$, such that $e^*\circ h +g$ is convex continuous for every $e^*\in Y^*$. By the Fenchel theorem, there exists a family of continuous linear maps $(e^*_i)_{i\in I}$ and real numbers $(c_i)_{i\in I}$, such that $f(y)=\sup_{i\in I} e^*_i (y)+c_i$ for all $y\in Y$. Thus, $f(h(x)) +g(x)= \sup_{i\in I} \lbrace e^*_i \circ h(x)+c_i+g(x)\rbrace$ for all $x\in X$. It follows that $f\circ h + g$ is convex, since $e^*_i \circ h +c_i +g$ is convex for each $i\in I$. On the other hand, since $f, h$ and $g$ are continuous, then $f\circ h +g$ is continuous.
\end{proof}

We recall that the Bourgain-Diestel theorem established in \cite{BD} says that: if a bounded linear operator $T : Y\to X$ is limited then it is strictly cosingular, that is, the only Banach spaces $E$ for which we can find a linear bounded operator $q_X : X \to E$ such that $q_X\circ T$ is surjective, are finite dimensional. This result applied with the identity map, gives the well known Josefson-Nissenzweig theorem \cite{J}, \cite{N}, with another proof. We give below an extension of the Bourgain-Diestel theorem, where the assumption of bounded linearity of $q_X$ is replaced by the more general condition of co-Lipschitz d.c mapping. Our proof is based on Theorem \ref{limiteddifferentiability} and the existence of convex G\^ateaux not Fr\'echet differentiable function at some point in infinite dimensional, which is a result based on the Josefson-Nissenzweig theorem. For a canonical contruction of such functions, we refeer to \cite{Ba1}. Thus, Theorem \ref{limiteddifferentiability} together with Josefson-Nissenzweig theorem implies the following extension of Bourgain-Diestel theorem.
\begin{thm} \label{BD} ({\bf Extension of Bourgain-Diestel theorem }) Let $T : Y\to X$ be a bounded linear operator. Then, we have that $(1) \Longrightarrow(2)\Longrightarrow (3)\Longrightarrow (4)$.

\begin{enumerate}
    \item $T$ is limited operator.
    \item The only Banach spaces $E$ for which we can find a d.c mapping $q_X : X \to E$ with a control function $g$ Fr\'echet-differentiable at $0$ and such that $q_X (0)=0$ and $B_E(0,t)\subset q_X\circ T(B_Y(0,t))$ for all $t>0$, are finite dimensional.
    \item The only Banach spaces $E$ for which we can find a d.c mapping $q_X : X \to E$ with a control function $g$ Fr\'echet-differentiable at some point $x_0$ and such that $q_X\circ T$ is co-Lipschitz, are finite dimensional.
    \item The only Banach spaces $E$ for which we can find a linear bounded operator $q_X : X \to E$ such that $q_X\circ T$ is surjective, are finite dimensional.
\end{enumerate}

\end{thm}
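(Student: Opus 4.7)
For $(1)\Rightarrow(2)$ I would argue by contradiction, using Theorem \ref{limiteddifferentiability}, Theorem \ref{dc}, and Proposition \ref{conv}. Assume $E$ is infinite-dimensional and that a $q_X$ as in $(2)$ exists. The Josefson--Nissenzweig theorem (via the construction recalled from \cite{Ba1}) produces a convex continuous $f : E \to \R$ which is G\^ateaux but not Fr\'echet differentiable at $0$. The function $F := f \circ q_X + g$ is convex continuous by Proposition \ref{conv}. Theorem \ref{dc} forces $q_X$ to be Fr\'echet differentiable at $0$ (its control $g$ is), whence the chain rule shows that $F$ is G\^ateaux differentiable at $0$. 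Applying Theorem \ref{limiteddifferentiability} to the limited operator $T$ upgrades this to Fr\'echet differentiability of $F\circ T$ at $0$; subtracting $g\circ T$, which is Fr\'echet at $0$ by linearity of $T$, gives the same conclusion for $f\circ q_X\circ T$. The ball-inclusion hypothesis in $(2)$ then realises each small $z\in E$ as $q_X\circ T(y)$ with $\|y\|_Y$ arbitrarily close to $\|z\|_E$; combining this representation with Fr\'echet differentiability of $q_X\circ T$ at $0$ (Theorem \ref{dc} once more) and of $f\circ q_X\circ T$ at $0$, an elementary estimate forces $f$ itself to be Fr\'echet differentiable at $0$, the required contradiction.

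For $(2)\Rightarrow(3)$ the plan is to manufacture from a $q_X$ witnessing $(3)$ a new d.c. mapping $\hat q$ witnessing the hypothesis of $(2)$ for the \emph{same} Banach space $E$, so that $(2)$ forces $E$ to be finite-dimensional. Taking the co-Lipschitz constant $c$ of $q_X\circ T$ to satisfy $c\leq 1$ without loss of generality, I set $\hat q(x) := c^{-1}\bigl(q_X(x+x_0)-q_X(x_0)\bigr)$. A direct verification (using that $g$ is a control for $q_X$ and that translation preserves convexity) shows $\hat q$ is d.c. with control function $\hat g(x) := g(x+x_0)$, which is Fr\'echet differentiable at $0$, and $\hat q(0)=0$. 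The desired inclusion $B_E(0,t)\subset\hat q\circ T(B_Y(0,t))$ then follows, after rescaling by $c$, from the co-Lipschitz bound $B_E(q_XT(y_1),ct)\subset q_XT(B_Y(y_1,t))$ applied at a point $y_1\in Y$ with $T(y_1)=x_0$. The principal obstacle is precisely to justify that such a $y_1$ can be found, i.e., that $x_0\in T(Y)$: I intend to handle this by exploiting that $q_X\circ T$ is surjective (a consequence of being co-Lipschitz) to pick $y_1$ with $q_XT(y_1)=q_X(x_0)$ and then absorb the fibre discrepancy $x_0-T(y_1)\in\ker q_X$ by a further translation chosen to preserve the Fr\'echet point of the control function.

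For $(3)\Rightarrow(4)$ the argument is immediate: any bounded linear $q_X : X\to E$ is trivially d.c. with the zero control function, which is Fr\'echet differentiable everywhere (so at any $x_0$). If $q_X\circ T$ is surjective then the Open Mapping Theorem makes it an open map, hence co-Lipschitz with some constant. Thus every hypothesis of $(3)$ holds, and $(3)$ forces $E$ to be finite-dimensional, closing the chain $(1)\Rightarrow(2)\Rightarrow(3)\Rightarrow(4)$.
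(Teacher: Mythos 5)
Your arguments for $(1)\Rightarrow(2)$ and $(3)\Rightarrow(4)$ are correct and essentially coincide with the paper's. For $(1)\Rightarrow(2)$ the paper uses the same three ingredients (Proposition \ref{conv}, Theorem \ref{dc}, Theorem \ref{limiteddifferentiability}) but in contrapositive form at the last step: it uses the ball inclusion to show directly that $f\circ q_X\circ T$ is \emph{not} Fr\'echet differentiable at $0$, via $\sup_{h\in B_Y(0,1)}t^{-1}|f\circ q_X\circ T(th)|\geq\sup_{h\in B_E(0,1)}t^{-1}|f(th)|\geq\varepsilon$, and concludes that $T$ cannot be limited; you instead push Fr\'echet differentiability of $f\circ q_X\circ T$ down to $f$. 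These are the same estimate read in opposite directions, so the difference is cosmetic. Your justification of $(3)\Rightarrow(4)$ (zero control function, Open Mapping Theorem) is exactly what the paper has in mind when it calls that implication trivial.

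The genuine gap is in $(2)\Rightarrow(3)$. Your map $\hat q(x)=c^{-1}\bigl(q_X(x+x_0)-q_X(x_0)\bigr)$ is indeed d.c.\ with a control function Fr\'echet differentiable at $0$, but $\hat q\circ T(y)=c^{-1}\bigl(q_X(Ty+x_0)-q_X(x_0)\bigr)$ inherits the ball inclusion from the co-Lipschitz property of $q_X\circ T$ only if the translation by $x_0$ in $X$ is realised by a translation by some $y_1$ in $Y$, i.e.\ only if $x_0\in T(Y)$ --- you correctly identify this as the obstacle, but your repair does not close it. Since $q_X$ is nonlinear, $q_X T(y_1)=q_X(x_0)$ does not place $x_0-T(y_1)$ in any meaningful ``kernel'' of $q_X$; more importantly, the discrepancy to be absorbed is not in the values of $q_X$ but in the location of the Fr\'echet point of the control function $g$. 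A further translation by $w=x_0-T(y_1)$ moves that point into $T(Y)$ at the cost of replacing $q_X\circ T$ by $y\mapsto q_X(Ty+w)$, i.e.\ $q_X$ restricted to the affine set $w+T(Y)$, whose co-Lipschitzness is precisely what you do not know. To be fair, the paper offers no argument here either (it declares $(2)\Rightarrow(3)$ trivial), and the translation argument is genuinely immediate only when $x_0\in T(Y)$ (for instance $x_0=0$, or $T$ surjective); as written, though, this step of your proof fails.
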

\begin{proof} The implications $(2)\Longrightarrow (3)\Longrightarrow (4)$ are trivial. Let us prove $(1)\Longrightarrow (2)$. In fact, we are going to prove $\neg (2)\Longrightarrow \neg (1)$. Indeed, let $E$ be an infinite dimensional Banach space. We can find a convex Lipschitz continuous function $f : E \to \R$ which is G\^ateaux-differentiable but not Fr\'echet-differentiable at $0$, with $f(0)=0$ and $f'(0)=0$ (see a construction of such a function in \cite{Ba1}). Since $f$ is not Fr\'echet-differentiable at $0$, then 
\begin{eqnarray*}
(\exists\, \varepsilon >0) (\forall\delta >0) (\forall t, 0<t\leq \delta) \sup_{h\in B_E(0,1)} t^{-1} |f(th)|\geq \varepsilon.
\end{eqnarray*}
Suppose that $q_X : X \to E$ is a d.c mapping with a control function $g$ Fr\'echet-differentiable at $0$, such that $q_X(0)=0$ and $B_E(0,t)\subset q_X\circ T(B_Y(0,t))$, for all $t>0$. Since $f'(0)=0$, the only candidate for Fr\'echet-derivative of $f\circ q_X\circ T$ at $0$ is $0$. But since $q_X(0)=0$ and $B_E(0,t)\subset q_X\circ T(B_Y(0,t))$ for all $t>0$, we have for every $0< t \leq \delta$,
\begin{eqnarray*}
\sup_{h\in B_Y(0,1)} t^{-1} |f\circ q_X\circ T(th)| \geq \sup_{h\in B_E(0,1)} t^{-1} |f(th)|\geq \varepsilon.
\end{eqnarray*}
This shows that $f\circ q_X\circ T$ is not Fr\'echet-differentiable at $0$. By Proposition \ref{conv}, $f\circ q_X +g$ is convex continuous and $g$ is Fr\'echet-differentiable at $0$. Using Theorem \ref{dc}, $q_X$ is Fr\'echet-differentiable at $0$, since its control function $g$ has that property. It follows that $f\circ q_X+ g$ is a convex continuous function G\^ateaux-differentiable at $0=T(0)$. Suppose by contradiction that $T$ is limited. Then, $f\circ q_X\circ T +g\circ T$ is Fr\'echet-differentiable at $0$ by \cite[Theorem 1.]{Ba1} which implies that $f\circ q_X\circ T$ is Fr\'echet-differentiable at $0$ (since $g\circ T$ has that property) which is a contradiction. Hence, $T$ is not limited. 
\end{proof}
\begin{cor} Let $E$ be any infinite dimensional Banach space. Then, there exists no d.c mapping $q :\ell^{\infty}(\N) \to E$ with a control function $g$ Fr\'echet-differentiable at $0$ such that the restriction $q_{|c_0(\N)}$ (to $c_0(\N)$) is co-Lipschitz.
\end{cor}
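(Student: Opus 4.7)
The plan is to invoke Theorem~\ref{BD} with $Y = c_0(\N)$, $X = \ell^\infty(\N)$, and $T$ equal to the canonical inclusion $\iota : c_0(\N) \hookrightarrow \ell^\infty(\N)$. The one piece of classical background needed is that this inclusion is a limited operator: the Phillips lemma states that every weak-$^*$ null sequence $(p_n)_n$ in $(\ell^\infty(\N))^*$ satisfies $\sum_k |p_n(e_k)| \to 0$, which is precisely $\|\iota^* p_n\|_{\ell^1(\N)} \to 0$.

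Suppose, for contradiction, that $E$ is an infinite-dimensional Banach space and that $q : \ell^\infty(\N) \to E$ is a d.c.\ mapping with a control function $g$ Fr\'echet-differentiable at $0$ such that $q|_{c_0(\N)} = q \circ \iota$ is co-Lipschitz. I would then verify that the pair $(q \circ \iota, \, g \circ \iota)$ satisfies the hypotheses of statement (3) of Theorem~\ref{BD} with $T = \iota$. For every $e^* \in E^*$, the function $e^* \circ q + g$ is convex and continuous on $\ell^\infty(\N)$ by the d.c.\ hypothesis on $q$, and composing with the bounded linear map $\iota$ preserves convexity and continuity; thus $(e^* \circ q \circ \iota) + (g \circ \iota) = (e^* \circ q + g) \circ \iota$ is convex continuous, so $q \circ \iota$ is d.c.\ on $c_0(\N)$ with control function $g \circ \iota$. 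Since $\iota$ is bounded linear and $\iota(0) = 0$, the chain rule for Fr\'echet differentiability gives that $g \circ \iota$ is Fr\'echet-differentiable at $0$. Co-Lipschitzness of $q \circ \iota$ holds by hypothesis.

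With these hypotheses in place, and since $\iota$ is limited by the classical fact above, the implication $(1) \Rightarrow (3)$ of Theorem~\ref{BD} forces $E$ to be finite-dimensional, contradicting our assumption. There is no serious obstacle in this argument: the essential point is to recognize that the restriction operation along $\iota$ converts the data on $\ell^\infty(\N)$ into data on $c_0(\N)$ of exactly the type addressed by Theorem~\ref{BD}(3), and that the compatibility of the d.c.\ structure and of Fr\'echet differentiability of the control function with composition by a bounded linear map is straightforward.
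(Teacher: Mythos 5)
Your proposal is correct and is essentially the paper's own (one-line) proof: apply the implication $(1)\Rightarrow(3)$ of Theorem~\ref{BD} to the canonical embedding $\iota : c_0(\N)\to\ell^{\infty}(\N)$, which is limited. One small remark: the intermediate step in which you recast the data as the pair $(q\circ\iota,\,g\circ\iota)$ on $c_0(\N)$ is superfluous and slightly misreads statement (3), since there the d.c.\ mapping $q_X$ is required to be defined on $X=\ell^{\infty}(\N)$ (the codomain of $T$), so the hypotheses of (3) are satisfied directly by the given $q$ and $g$, with $q\circ\iota$ co-Lipschitz by assumption.
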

\begin{proof} The proof follows from Theorem \ref{BD}, since the canonical embedding from $c_0(\N)$ into $\ell^{\infty}(\N)$ is a limited operator (see \cite{BD}).

\end{proof}
\begin{rem} Every convex Lipschitz continuous function from $f: c_0(\N)\to \R$ has a convex Lipschitz continuous extension to $\ell^{\infty}(\N)$, it suffices to consider the function 
$$\tilde{f}(x):=\inf_{y\in c_0(\N)} \lbrace f(y) +L(f)\|x-y\|_{\infty}\rbrace,$$
where $L(f)$ denotes the Lipschitz constant of $f$. However, if a function $f:c_0(\N)\to \R$ is G\^ateaux-differentiable but not Fr\'echet-differentiable at some point $a\in c_0(\N)$, then $f$ cannot have an extension to $\ell^{\infty}(\N)$ which is convex Lipschitz continuous and G\^ateaux-differentiable at $a$. This is due to the fact that the identity mapping $i :c_0(\N)\longrightarrow \ell^{\infty}(\N)$ is a limited operator, and so by \cite{Ba1} every restriction to $c_0(\N)$ of convex Lipschitz continuous and G\^ateaux-differentiable function at $a$, must be Fr\'echet-differentiable at $a$.
\end{rem}


\section{Finite Rank Operators} \label{S4}

Our final aim is to shown how the idea of the proof of Theorem \ref{differentiability} can be used to characterize finite rank operators. The forthcoming result involves the bornology generated by the bounded sets of $X$ which are contained in finite dimensional subspaces. To get this result, we introduce a new class of functions which contains all the Lipschitz functions.

\begin{defn} \label{finitely-Lipschitz}
We say that a function $f:\UU\subset X\to Z$ is finitely locally Lipschitz if for every finite dimensional affine subspace $Y$ of $X$ such that $\UU\cap Y\neq\emptyset$, $f|_{(Y\cap \UU)}$ is locally Lipschitz. If every restriction is Lipschitz, we simply say that $f$ is finitely Lipschitz. We denote by $\textup{FLip}(\UU,Z)$ the linear space of finitely Lipschitz functions from $\UU\subset X$ to $Z$ and by $\textup{FLip}_{x_0}(\UU,Z)$ the subspace of functions $f\in\textup{FLip}(X,Z)$ which vanish at some fixed point $x_0\in U$. In the case $Z=\mathbb{R}$, we simply write $\textup{FLip}(\UU)$ and $\textup{FLip}_{x_{0}}(\UU)$.
\end{defn}


\begin{rem}
Let $X$ and $Z$ be two Banach spaces, $\UU\subset X$ be any open set and $x_{0}\in \UU$. In case that $X$ is finite dimensional, it is clear that $\textup{FLip}(\UU)=\textup{Lip}(\UU)$ (the same holds for the local concept), but in general the inclusion $\textup{Lip}(\UU)\subset\textup{FLip}(\UU)$ is strict (the same holds for the local concept). In fact, whenever $\UU=X$, the equality holds if and only if $X$ is finite dimensional. This is easily seen by noticing that if $\varphi$ is any not bounded linear functional, then $\varphi$ belongs to $\textup{FLip}(X)\setminus \textup{Lip}(X)$. As we can see, in infinite dimension a finitely locally Lipschitz functions does not even need to be continuous. 
\end{rem}

\begin{thm}\label{finitedifferentiability}
Let $X$ and $Y$ be Banach spaces, $\UU\subset X$ be an open set and $T:Y\to X$ be a bounded linear operator. Then $T$ has finite rank if and only if for every Banach space $Z$ and every finitely locally Lipschitz function $f:\UU\to Z$, $f\circ T$ is Fr\'echet differentiable at $y\in Y$ whenever $f$ is G\^ateaux differentiable at $Ty\in \UU$.
\end{thm}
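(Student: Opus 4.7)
The proof follows the template of Theorem \ref{differentiability}, splitting into two directions.

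For the necessity, suppose $T$ has finite rank, so $W := T(Y)$ is a finite-dimensional subspace of $X$. Regarded as an operator $T : Y \to W$, $T$ is compact, since $T(B_Y)$ is bounded in a finite-dimensional space. By definition of finitely locally Lipschitz, $g := f|_{\UU \cap W}$ is locally Lipschitz on the open subset $\UU \cap W$ of $W$, and if $f$ is G\^ateaux differentiable at $Ty$, so is $g$ (with the restricted differential). Since $T(Y) \subset W$, one has $f \circ T = g \circ T$, and Theorem \ref{differentiability} applied to $T : Y \to W$ and $g$ yields Fr\'echet differentiability of $f \circ T$ at $y$.

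For the sufficiency I argue contrapositively. Assume $T$ does not have finite rank, so $T(Y)$ is infinite-dimensional. By Remark \ref{remarkseparated} there exist $\alpha, \beta > 0$ (with $\beta \leq 2\alpha$) and a $\beta$-separated sequence $(x_n) \subset T(Y) \cap \alpha S_X$. Fix lifts $y_n \in Y$ with $Ty_n = x_n$; in contrast to the compact case, $(y_n)$ need not be bounded. Pick $c_n \geq n\max(1, \|y_n\|)$, set $r_n := \beta/(4c_n)$, and define
\[
b_n(x) := \frac{1}{n}\max\!\left(1 - \frac{\|x - x_n/c_n\|}{r_n},\, 0\right), \qquad f := \sum_{n \in \N} b_n .
\]
Taking $\UU = X$, $Z = \R$, $y = 0$, the plan is to show that this $f$ is finitely locally Lipschitz and G\^ateaux differentiable at $0$ with $d_G f(0) = 0$, while $f \circ T$ fails to be Fr\'echet differentiable at $0$. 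G\^ateaux differentiability at $0$ is immediate (cf.\ Proposition \ref{intrel}): by Proposition \ref{coneseparated}, and since the positive scalings $x \mapsto x/c_n$ preserve generated cones from $0$, the cones through the bump supports $B(x_n/c_n, r_n)$ are pairwise disjoint; the condition $r_n < \|x_n/c_n\|$ (from $\beta \leq 2\alpha$) keeps each support off $0$, so every line through $0$ lies in $\{f=0\}$ in a neighborhood of $0$. Fr\'echet failure is witnessed by $\tilde y_n := y_n/c_n \to 0$, for which
\[
\frac{(f\circ T)(\tilde y_n)}{\|\tilde y_n\|} = \frac{b_n(x_n/c_n)}{\|y_n\|/c_n} = \frac{c_n}{n\|y_n\|} \geq 1.
\]

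The main obstacle is to verify that $f$ is \emph{finitely} locally Lipschitz. Away from $0$ this is routine, because $x_n/c_n \to 0$ and $r_n \to 0$ concentrate all but finitely many bumps into arbitrarily small neighborhoods of $0$; hence on any set bounded away from $0$, $f$ is locally a finite sum of Lipschitz bumps. The delicate case is a finite-dimensional affine subspace containing $0$, i.e., a finite-dimensional linear subspace $V$ of $X$. Scaling by $c_n$ shows that $B(x_n/c_n, r_n) \cap V \neq \emptyset$ iff $d(x_n, V) < \beta/4$, and a standard compactness argument (the $x_n$, of common norm $\alpha$ and within $\beta/4$ of the finite-dimensional $V$, would admit a Cauchy subsequence, violating $\beta$-separation) shows this occurs for only finitely many $n$. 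Consequently $f|_V$ is locally a finite sum of Lipschitz bumps, and the conclusion transfers to any finite-dimensional affine subspace. This counting step, needed precisely because $(y_n)$ may be unbounded, is the new ingredient relative to Theorem \ref{differentiability}.
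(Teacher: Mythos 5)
Your proof is correct and follows essentially the same route as the paper's: the necessity reduces to the finite-dimensional range $T(Y)$ (you package this via Theorem \ref{differentiability} applied to the compact corestriction, the paper redoes the short computation), and the sufficiency builds the same kind of counterexample --- disjoint bumps of unbounded Lipschitz constant placed along a $\beta$-separated sequence in $T(Y)$ scaled toward $0$, with Gâteaux differentiability at $0$ from cone-separation and the finitely-Lipschitz property from the same ``only finitely many bumps meet a finite-dimensional subspace'' counting argument the paper uses. The only differences are cosmetic (tent functions with normalization $c_n=n\max(1,\|y_n\|)$ versus the paper's scaled distance functions $\|y_n\|\,d(\cdot,C_n)$).
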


\begin{rem}
By similar arguments, we deduce a simplification of the statement of Theorem \ref{finitedifferentiability} as we did in Theorem \ref{differentiability} (using Proposition \ref{characterization}). This means that for the sufficiency, we will show the proof for $\UU=X$, $Z=\mathbb{R}$ and $y=0$.
\end{rem}

\begin{proof}
The necessity part goes alongs the lines of the necessity of Theorem \ref{differentiability}. Let $T$ be a finite rank operator and let $f:\UU\subset X\to Z$ be a locally finitely Lipschitz and G\^ateaux differentiable at $Ty\in \UU$. Since $TY$ is a finite dimensional subspace of $X$, the function $g:=f|_{TY}$ is Lipschitz and Fr\'echet differentiable at $Ty$. Then, if $d_Fg(Ty)$ denotes the Fr\'echet differential of $g$ at $Ty$, we have that
\[
   \sup_{h\in B_{Y}} \frac{\| (f\circ T)(y+th) - (f\circ T)(y) - t(d_{F}g(Ty)\circ T)(h) \|}{|t|} \]
\[= \sup_{u\in \overline{TB_{Y}}} \frac{\| f(Ty+tu) - f(Ty) - t d_{F}g(Ty)(u) \|}{|t|}
\]
\[= \sup_{u\in \overline{TB_{Y}}} \frac{\| g(Ty+tu) - g(Ty) - t d_{F}g(Ty)(u) \|}{|t|}
\]
From the last line, since $g$ is Fr\'echet differentiable at $Ty$, we deduce that the first supremum goes to $0$ when $t\to 0$. Then $f\circ T$ is Fr\'echet differentiable at $y$, being its Fr\'echet differential equal to $d_{G}g(Ty)\circ T$.\\

To prove the sufficiency we will proceed by contradiction. Suppose that $T:Y\to X$ is a bounded operator such that $TY$ is infinite dimensional. By Remark \ref{remarkseparated} we have that there exists a $\beta$-separated sequence $(x_n)_n$ in $TY$. Since $x_n\in TY$, for each $n$ there exists $y_n\in Y$ such that $Ty_n= x_n$. Now, define the sequence of subsets 
\[ C_n= B\left(\frac{x_n}{n\|y_n\|}, \frac{\beta}{4n\|y_n\|}\right)^{c}.\]
Since $(x_n)_n$ is a $\beta$-separated sequence, by Proposition \ref{coneseparated} we deduce that the family $(C_n^c)_n$ is pairwise disjoint. With this, the functions $f_n:X\to\mathbb{R}$ given by
\[ f_n(x)=\|y_n\|d(x,C_n)\quad n\in\mathbb{N} \]
are $\|y_n\|$-Lipschitz and have pairwise disjoint support. Consider now $f:X\to\mathbb{R}$ given by
\[ f(x) = \sup_{n} f_{n}(x). \]
It is easy to see that this function is well defined. \\

We claim that $f\in\textup{FLip}(X)$. Let $V$ be a finite dimensional affine subspace of $X$ and suppose that $V$ intersects infinitely many of the sets $(C_n^c)_n$, namely $(C_{n_k}^c)_k$. Take any sequence $(v_k)_k$ such that $v_k\in V\cap C_{n_k}^c$. If we consider $v'_k:=n_k\|y_k\|v_k$, we see that for $k,j\in\mathbb{N}$
\[ \beta \leq \| x_{n_k} - x_{n_j} \| \leq \| x_{n_k} - v'_k \| + \| v'_k - v'_j \| + \| v'_j - x_{n_j} \| \leq \frac{\beta}{2} + \| v'_k - v'_j \|. \]
which implies that $(v'_k)_k\subset V$ does not have accumulation points, which is impossible because $V$ is finite dimensional and
\[ \|v'_k\| \leq \|v'_k-x_{n_k}\| + \|x_{n_k}\| \leq \frac{\beta}{4} + M, \]
where $M=\|x_n\|$ (for all $n$). Then, $V$ intersects only finitely many of the sets $(C_n^c)_n$, namely $(C_{n_k}^c)_{k=1}^N$. Since the functions $f_n$ have pairwise disjoint support it follows that $\textup{Lip}(f|_V) \leq \max\{\|y_{n_k}\|\,:\,k=1,...,N\}$, which proves the claim. Similar to the Theorem \ref{differentiability}, we have that $0$ belongs to the core of the complement of the support of $f$. Thus, by Proposition \ref{intrel} we deduce that $f$ is G\^ateaux differentiable at $0$, and $d_Gf(0)=0$. However, we notice that:
\[\liminf_{n\to \infty} \frac{f\circ T (y_n/n\|y_n\|)-f\circ T(0)}{\|y_n/n\|y_n\|\|} = \liminf_{n\to \infty} \frac{ \|y_n\|\frac{\beta}{4n\|y_n\|}-0}{\frac{1}{n}} = \frac{\beta}{4} > 0, \]
which shows that $f\circ T$ is not Fr\'echet differentiable at $0$ since the sequence $(y_n/n\|y_n\|)_n$ goes to $0$.
\end{proof}

\begin{cor}\label{badfunction2} Let $T:Y\to X$ be a bounded operator with infinite rank. Then the set $F\subset \textup{FLip}(X)$ such that $f\in F$ if and only if $f$ is G\^ateaux differentiable at $0$ and $f\circ T$ is not Fr\'echet differentiable at $0$ or $f\equiv 0$, contains a subspace algebraically isomorphic to $\ell^{\infty}(\N)$.
\end{cor}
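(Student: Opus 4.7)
My plan is to adapt the construction from Corollary \ref{badfunction} to the finitely-Lipschitz setting, using as building blocks the functions appearing in the sufficiency proof of Theorem \ref{finitedifferentiability}. First, by Remark \ref{remarkseparated}, I would fix $\beta > 0$ and a $\beta$-separated sequence $(x_n)_n \subset TY$ together with preimages $y_n \in Y$ satisfying $Ty_n = x_n$. Enumerating the primes as $(p_i)_i$, for each prime $p$ I would set
\[
 C_{p,n} := B\!\left(\frac{x_{p^n}}{p^n\|y_{p^n}\|},\,\frac{\beta}{4p^n\|y_{p^n}\|}\right)^{\!c}, \quad f_{p,n} := \|y_{p^n}\|\,d(\cdot, C_{p,n}), \quad f_p := \sup_n f_{p,n},
\]
so that the argument in the proof of Theorem \ref{finitedifferentiability} yields $f_p \in \textup{FLip}(X)$, G\^ateaux differentiable at $0$ with vanishing differential, and $f_p \circ T$ not Fr\'echet differentiable at $0$. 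Applying Proposition \ref{coneseparated} to the (still $\beta$-separated) subsequence $(x_{p^n})_n$ shows that the relevant balls lie in pairwise disjoint cones through the origin, so the whole doubly-indexed family $\{f_{p_i,n} : i,n \in \N\}$ has pairwise disjoint supports, across distinct primes as well as within each prime.

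For $\mu \in \ell^\infty(\N)$ I would next define, in the spirit of Corollary \ref{badfunction},
\[
 L\mu := \sup_{i\in I_+} \mu_i\, f_{p_i} \;-\; \sup_{i\in I_-}(-\mu_i)\, f_{p_i}, \qquad I_+ = \{i:\mu_i\geq 0\},\; I_- = \N\setminus I_+.
\]
Pairwise disjointness of the supports of the $f_{p_i}$ causes $L\mu$ to collapse at each point $x$ to at most a single term $\mu_i f_{p_i}(x)$, from which linearity of $L$ and the formula $\textup{supp}(L\mu) \subset \bigcup_i \textup{supp}(f_{p_i})$ follow at once. Since $0$ lies in the core of the complement of this union (because by Proposition \ref{coneseparated} each line through $0$ meets at most one of the defining balls), Proposition \ref{intrel} would give that $L\mu$ is G\^ateaux differentiable at $0$ with zero differential. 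If $\mu \neq 0$ and $\mu_k \neq 0$, then evaluating along the null sequence $(y_{p_k^n}/(p_k^n\|y_{p_k^n}\|))_n$ and running the computation from the proof of Theorem \ref{finitedifferentiability} would yield
\[
 \liminf_{n\to\infty} \frac{|L\mu \circ T(y_{p_k^n}/(p_k^n\|y_{p_k^n}\|))|}{\|y_{p_k^n}/(p_k^n\|y_{p_k^n}\|)\|} \;\geq\; \frac{|\mu_k|\beta}{4} \;>\; 0,
\]
so $L\mu \circ T$ fails Fr\'echet differentiability at $0$; this places $L\mu$ in $F$ and also shows that $L$ is injective.

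The step I expect to require the most care is the verification that $L\mu \in \textup{FLip}(X)$, because $L\mu$ is built from an unbounded family of functions with unbounded Lipschitz constants. My plan is to reapply the combinatorial argument from the proof of Theorem \ref{finitedifferentiability}, but to the pooled collection of balls $B(x_k/(k\|y_k\|),\beta/(4k\|y_k\|))$ indexed by all prime powers $k$: given a finite-dimensional affine subspace $V \subset X$, if $V$ met infinitely many of these balls, then scaling the intersection points by the corresponding $k\|y_k\|$ would produce a bounded $\beta/2$-separated sequence sitting in a finite-dimensional set, contradicting local compactness. Hence $V$ meets only finitely many supports, and $L\mu|_V$ reduces to a finite supremum of Lipschitz functions, so is Lipschitz on $V$. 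Combined with the preceding paragraphs, this exhibits $L$ as a linear injection of $\ell^\infty(\N)$ into $F$, producing the algebraically isomorphic copy claimed in the statement.
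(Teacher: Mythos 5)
Your proposal is correct and follows essentially the same route as the paper's proof: the same prime-power indexing of a $\beta$-separated sequence in $TY$, the same building blocks $f_p=\sup_n f_{p,n}$, and the same operator $L\mu=\sup_{i\in I_+}\mu_i f_{p_i}-\sup_{i\in I_-}(-\mu_i)f_{p_i}$, with only an immaterial difference in the choice of scaling factors ($p^n\|y_{p^n}\|$ versus the paper's $n\|y_{p^n}\|$). Your explicit verification that $L\mu\in\textup{FLip}(X)$ via the pooled family of balls is a detail the paper delegates to ``in a similar way as in Corollary \ref{badfunction}'', but it is the same combinatorial argument as in Theorem \ref{finitedifferentiability}.
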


\begin{proof}
This proof goes similar to Corollary \ref{badfunction} but with suitable modifications. Consider as before a $\beta$-separated sequence $(x_n)_n:=(Ty_n)_n\subset TY$. Let $p\in \mathbb{N}$ be a prime number and, for $n\in \mathbb{N}$, define the set $C_{p,n}$ and the functions $f_{p^n},~g_p:X\to\R$ by

\[ C_{p,n}=B\left(\dfrac{x_{p^n}}{n\|y_{p^{n}}\|},\dfrac{\beta}{4n\|y_{p^{n}}\|}\right)^{c},~ f_{p^n}(x)= \|y_n\|\textup{dist}(x, C_{p,n}),\]

\[~ \text{and }g_p(x)=\sup_{n\in \N} f_{p^n}(x). \]

Thanks to Theorem \ref{finitedifferentiability}, we know that $g_p$ belongs to $\textup{FLip}(X)$, is G\^ateaux differentiable at $0$ and $g_p\circ T$ is not Fr\'echet differentiable at $0$. Let $(p_n)_n$ be an enumeration of prime numbers. Since $(x_n)_n$ is $\beta$-separated, the sets $\{\textup{supp}(g_{p_n})_n:~n\in \N\}$ are pairwise disjoint. In a similar way as in the proof of Corollary \ref{badfunction}, we can deduce that the linear operator $L:\ell^\infty(\N)\to \textup{FLip}(X)$ given by 
\[ L\mu(x): = \sup_{i\in I_+} \mu_{i} g_{p_{i}}(x)-\sup_{i\in I_-}-\mu_{i} g_{p_{i}}(x), \]
where $I_+=\{n\in \N:~\mu_n\geq 0\}$ and $I_-=\N\setminus I_+$, is well defined, injective and satisfies that for every $\mu\in \ell^\infty(\N)$ $\mu\neq 0$, $L\mu$ is G\^ateaux differentiable at $0$, but $L\mu \circ T$ is not Fr\'echet differentiable at $0$.
\end{proof}

\begin{rem}
By construction, none of the functions $f\in F$ evoked in Corollary \ref{badfunction2} are Lispchitz, but instead, they are finitely Lipschitz. Thus, if $T:Y \to X$ is a noncompact operator, then we can find two subspaces of $\textup{FLip}(X)$, infinite dimensionals with uncountable Hammel basis, $F_1$ and $F_2$ (given by Corollary \ref{badfunction} and Corollary \ref{badfunction2} respectively) such that satisfy for each $f\in F_1\cup F_2$, $f$ is G\^ateaux differentiable at 0, $f\circ T$ is not Fr\'echet differentiable at 0. Since we can ensure that in $F_2$ there are no Lipschitz function except from $0$, $F_1\cap F_2=\{0\}$.
\end{rem}

\begin{question}
In view of \cite[Theorem 1]{Ba1}, Theorem \ref{differentiability} and Theorem \ref{finitedifferentiability}, it seems interesting to study the case of weak-Hadamard differentiability, and furthermore the abstract framework of $\beta$-differentiability, with $\beta$ a given bornology on $X$. That is, since for a bornology $\beta$ on $X$ we can define a notion of differentiability: Is it true that there exists a class of functions $\mathcal{F}_\beta$ such that an operator $T:Y\to X$ sends the unit ball $B_Y$ to $TB_Y\in\beta$ iff for each function $f\in \mathcal{F}_\beta$, $f\circ T$ is Fr\'echet differentiable at $y\in Y$, whenever $f$ is G\^ateaux differentiable at $Ty$? In \cite{Ba1} is shown that this is true whenever $\beta$ is the bornology of limited sets, by using the convex functions. We have shown the same result but for the Hadamard bornology by using the Lipschitz functions and moreover, we have defined the finitely Lipschitz functions for the bornology generated by bounded convex sets with finite dimensional span, which more or less corresponds to the convex version of the G\^ateaux bornology.
\end{question}

\section*{Acknowledgment}
The authors would like to thank A. Daniilidis for fruitful discussions. The second author was supported by the grants CONICYT-PFCHA/Doctorado Nacional/2017-21170003, FONDECYT 1171854 and CMM Grant AFB170001. The third author was supported by the grants CONICYT-PFCHA/Doctorado Nacional/2018-21181905, Monge Invitation Programe of \'Ecole Polytechnique, FONDECYT 1171854 and CMM Grant AFB170001.

\bibliographystyle{amsplain}

\end{document}